\newtheorem{theorem}{Theorem}[section]
\newtheorem{prop}[theorem]{Proposition}
\newtheorem{remark}{Remark}[section]
\newtheorem{claim}{Claim} [section]
\newenvironment{proof-sketch}{\noindent{\bf Sketch of Proof}\hspace*{1em}}{\qed\bigskip}
\newcommand{\RR}{\mathbb R}
\newcommand{\NN}{\mathbb N}
\renewcommand{\leq}{\leqslant}
\renewcommand{\geq}{\geqslant}
\begin{document}

\title[Relaxation methods for optimal control problems]{Relaxation methods for optimal control problems}

\author[N.S. Papageorgiou]{N.S. Papageorgiou}
\address[N.S. Papageorgiou]{National Technical University, Department of Mathematics,
				Zografou Campus, Athens 15780, Greece
				 \& Institute of Mathematics, Physics and Mechanics, 1000 Ljublja\-na, Slovenia}
\email{\tt npapg@math.ntua.gr}

\author[V.D. R\u{a}dulescu]{V.D. R\u{a}dulescu}
\address[V.D. R\u{a}dulescu]{Faculty of Applied Mathematics, AGH University of Science and Technology, al. Mickiewicza 30, 30-059 Krak\'{o}w, Poland  
 \& Institute of Mathematics, Physics and Mechanics, 1000 Ljubljana, Slovenia
\& Department of Mathematics, University of Craiova, 200585 Craiova, Romania}
\email{\tt radulescu@inf.ucv.ro}

\author[D.D. Repov\v{s}]{D.D. Repov\v{s}}
\address[D.D. Repov\v{s}]{Faculty of Education and Faculty of Mathematics and Physics, University of Ljubljana \& Institute of Mathematics, Physics and Mechanics, 1000 Ljubljana, Slovenia}
\email{\tt dusan.repovs@guest.arnes.si}

\keywords{Admissible relaxation, maximal monotone map, Young measure, convex conjugate, weak norm.\\
\phantom{aa} 2010 Mathematics Subject Classification. Primary:  49J20. Secondary: 34A60.}

\begin{abstract}
We consider a nonlinear optimal control problem with dynamics described by a differential inclusion involving a maximal monotone map $A:\mathbb{R}^N\rightarrow2^{\mathbb{R}^N}$. We do not assume that $D(A)=\mathbb{R}^N$, incorporating in this way  systems with unilateral constraints in our framework. We present two relaxation methods. The first one is an outgrowth of the reduction method from the existence theory, while the second method uses Young measures. We show that the two relaxation methods are equivalent and admissible.
\end{abstract}

\maketitle

\section{Introduction}

We consider the following optimal control problem:
\begin{equation}\label{eq1}
	\left\{
		\begin{array}{ll}
			J(x,u)=\int^b_0 L(t,x(t),u(t))dt\rightarrow \inf=m\\
			\mbox{subject to:}\ -x'(t)\in A(x(t)) + f(t,x(t))u(t)\ \mbox{for almost all}\ t\in T=[0,b],\\
			x(0)=x_0, u(t)\in U(t,x(t))\ \mbox{for almost all}\ t\in T.
		\end{array}
	\right\}
\end{equation}

In this problem, the dynamics of the system are described by a differential inclusion involving a maximal monotone map $A:\RR^N\rightarrow 2^{\RR^N}$. We do not assume that $D(A)=\RR^N$ (recall that $D(A)=\{x\in\RR^N:A(x)\neq\emptyset\}$ is the domain of $A(\cdot)$). This way we incorporate in our framework systems with unilateral constraints (differential variational inequalities). In addition, the control constraint set $U(t,x)$ is state-dependent, that is, the system has {\it a priori} feedback, a setting which is of interest in engineering and economic problems. The existence theory of such problems is based on the so-called ``reduction technique", which was developed in the pioneering works of Cesari \cite{10, 11}  and Berkovitz \cite{3} (see also the books of Berkovitz \cite{4} and Cesari \cite{12}). According to this method, the original optimal control problem is reduced to a calculus of variations problem with multivalued dynamics. This problem is obtained by elimination of the control variable $u\in\RR^m$. For this approach to work, we need to have enough convex structure in the problem, usually expressed in terms of the ``property Q" of Cesari. In the absence of such a convex structure, a minimizing sequence of state-control pairs need not converge to an admissible pair. To rectify this, we need to augment the system and pass to a ``convexified" version known as the ``relaxed problem", which captures the asymptotic behaviour of the minimizing sequences. The process of relaxation is a delicate one since we have to strike a sensitive balance between competing requirements. We want that the relaxed problem exhibits the following three fundamental properties:
\begin{itemize}
	\item [(a)] Every original state is also a relaxed state (that is, the original problem is embedded in the relaxed one).
	\item [(b)] Every relaxed state can be approximated by original ones (that is, we want to make sure that we did not augment the system too much).
	\item [(c)] The values of the relaxed and original problems are equal and the relaxed problem has a solution (that is, there exists an optimal state-control pair).
\end{itemize}

Note that the first two requirements concern the dynamics of the system, while the third one concerns the cost functional. Any relaxation method which meets these three requirements, is said to be ``admissible".

In this paper, under general conditions on the data of problem \eqref{eq1}, we present two such admissible relaxation methods. The first one is an outgrowth of the reduction method from the existence theory, while the second method uses Young measures.

Relaxation methods for different classes of optimal control problems can be found in the works of Avgerinos \& Papageorgiou \cite{1}, Buttazzo \cite{8}, Buttazzo \& Dal Maso \cite{9}, Emamizadeh, Farjudian \& Mikayelyan \cite{14}, Hu \& Papageorgiou \cite{21}, Liu, Liu \& Fu \cite{22},  Papageorgiou, R\u adulescu \& Repov\v{s} \cite{PRRANA, PRREECT0, PRRAMO, PRRJDE, PRREECT}, Roubicek \cite{24}, Tolstonogov \cite{25}, and Warga \cite{27}. However, none of the aforementioned works covers the case of systems with unilateral constraints.

\section{Mathematical background and hypotheses}

Let $A:\RR^N\rightarrow2^{\RR^N}$. The domain of $A(\cdot)$ is the set
\begin{equation*}
	D(A) = \{x\in\RR^N: A(x)\neq\emptyset\}
\end{equation*}
and the graph of $A(\cdot)$ is the set
\begin{equation*}
	{\rm Gr}\,A = \{(x,x^*)\in\RR^N\times\RR^N:x^*\in A(x)\}.
\end{equation*}

We say that $A(\cdot)$ is ``monotone", if
\begin{equation*}
	(x^*-y^*,x-y)_{\RR^N}\geq0\ \mbox{for all}\ (x, x^*),\ (y,y^*)\in {\rm Gr}\,A.
\end{equation*}

If $A(\cdot)$ satisfies
\begin{equation*}
	(x^*-y^*,x-y)_{\RR^N}>0\ \mbox{for all}\ (x,x^*),(y,y^*)\in {\rm Gr}\,A\ \mbox{with}\ x\neq y,
\end{equation*}
then we say that $A(\cdot)$ is ``strictly monotone". We say that $A(\cdot)$ is ``maximal monotone", if ${\rm Gr}\,A$ is maximal with respect to inclusion among the graphs of all monotone maps. This is equivalent to saying that
\begin{equation*}
	``\mbox{if}\ (x^*-y^*,x-y)_{\RR^N}\geq0\ \mbox{for all}\ (x,x^*)\in {\rm Gr}\,A,\ \mbox{then}\ (y, y^*)\in {\rm Gr}\,A".
\end{equation*}

Suppose that $Y,Z$ are Banach spaces and $V:Y\rightarrow Z$. We say that $V(\cdot)$ is ``compact", if it is continuous and maps bounded sets in $Y$ onto relatively compact subsets of $Z$. Also, we say that $V(\cdot)$ is ``completely continuous", if $y_n\xrightarrow{w}y$ in $Y$, implies that $V(x_n)\rightarrow V(x)$ in $Z$. In general, these two notions are distinct. However, if $Y$ is reflexive, then complete continuity implies compactness. Moreover, if in addition, $V(\cdot)$ is linear, then the two notions coincide.

From fixed point theory, we will need the so-called ``Leray-Schauder alternative theorem", which we recall here.

\begin{theorem}\rm{(See e.g. \cite{17, PRRbook})}\label{th1}
	If $Y$ is a Banach space, $V:Y\rightarrow Y$ is a compact map, and
	\begin{equation*}
		S=\{y\in Y: y=\lambda V(y)\ \mbox{for some}\ 0<\lambda<1\},
	\end{equation*}
	then one of the following two statements is true:
	\begin{itemize}
		\item [(a)] $S$ is bounded;
		\item [(b)] $V$ has a fixed point.
	\end{itemize}
\end{theorem}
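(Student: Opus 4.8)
The plan is to reduce Theorem~\ref{th1} to Schauder's fixed point theorem by composing $V$ with a radial retraction onto a large ball. I would treat the substantive case, in which $S$ is bounded, and from it construct a fixed point of $V$; this yields the dichotomy. Since $S$ is bounded, fix $\rho>0$ with $\|y\|_Y<\rho$ for every $y\in S$ — for instance $\rho=1+\sup_{y\in S}\|y\|_Y$, with $\rho=1$ when $S=\emptyset$. The strict inequality here is the point of the whole argument, so I would record it carefully.

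Next I would introduce the radial retraction $r:Y\to\overline{B}_\rho$ onto the closed ball $\overline{B}_\rho=\{y\in Y:\|y\|_Y\le\rho\}$, given by $r(y)=y$ if $\|y\|_Y\le\rho$ and $r(y)=\rho\,y/\|y\|_Y$ if $\|y\|_Y>\rho$. This map is continuous with bounded range, so $W=r\circ V:\overline{B}_\rho\to\overline{B}_\rho$ is again compact: it is continuous and sends bounded sets into the continuous image of the relatively compact set $V(\overline{B}_\rho)$. Passing to the nonempty compact convex set $K=\overline{\operatorname{conv}}\,W(\overline{B}_\rho)\subseteq\overline{B}_\rho$, which satisfies $W(K)\subseteq K$, Schauder's fixed point theorem gives a point $y_0\in\overline{B}_\rho$ with $y_0=r(V(y_0))$.

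The final step is to exclude the ``projected'' possibility. If $\|V(y_0)\|_Y\le\rho$, then $r(V(y_0))=V(y_0)$ and hence $y_0=V(y_0)$, so $V$ has a fixed point and alternative~(b) holds. If instead $\|V(y_0)\|_Y>\rho$, then $y_0=r(V(y_0))=\rho\,V(y_0)/\|V(y_0)\|_Y$, which forces $\|y_0\|_Y=\rho$ and $y_0=\lambda V(y_0)$ with $\lambda=\rho/\|V(y_0)\|_Y\in(0,1)$; thus $y_0\in S$, so $\|y_0\|_Y<\rho$ by the choice of $\rho$, contradicting $\|y_0\|_Y=\rho$. Hence this case is impossible and $V$ must have a fixed point.

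I expect the genuinely delicate points to be the two bookkeeping matters flagged above: checking that the composition of the compact map $V$ with the Lipschitz retraction $r$ is still compact and that one may restrict $W$ to an honest compact convex set so that Schauder's theorem applies verbatim, and — more importantly — keeping the inequality $\|y\|_Y<\rho$ on $S$ strict, since it is exactly this strictness that turns the boundary case $\|y_0\|_Y=\rho$ into a contradiction. If one prefers to bypass the explicit retraction, one can instead quote Schaefer's fixed point theorem or run a topological degree argument, but the retraction approach above is the most self-contained.
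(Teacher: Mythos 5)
The paper does not prove Theorem~\ref{th1} at all: it is quoted as a known background result with a pointer to \cite{17, PRRbook}, so there is no in-paper argument to compare yours against. On its own terms your proof is correct and is in fact the standard proof of the Leray--Schauder (Schaefer) alternative: you establish the only nontrivial implication, namely that boundedness of $S$ forces a fixed point, via the radial retraction $r$ onto $\overline{B}_\rho$ and Schauder's theorem applied to $K=\overline{\operatorname{conv}}\,W(\overline{B}_\rho)$, and the boundary case $\|y_0\|_Y=\rho$ is correctly killed by the strict bound $\|y\|_Y<\rho$ on $S$. Two cosmetic remarks: the compactness of $W(\overline{B}_\rho)$ is cleanest if you write $W(\overline{B}_\rho)\subseteq r\bigl(\overline{V(\overline{B}_\rho)}\bigr)$, a continuous image of a compact set, before taking the closed convex hull (Mazur); and it is worth noting that the theorem as printed in the paper says ``(a) $S$ is bounded'' where the usual statement (and the only reading under which the alternative has content, and the one the paper actually uses in Claim~\ref{claim4}) is ``$S$ is unbounded'' --- your proof addresses exactly the intended implication.
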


Now let $(\Omega, \Sigma, \mu)$ be a finite measure space and $X$  a separable Banach space. We introduce the following families of subsets of $X$:
\begin{equation*}
	\begin{array}{ll}
		& P_{f(c)}(X) = \{A\subseteq X: A\ \mbox{is nonempty, closed (and convex)}\}, \\
		& P_{(w)k(c)}(X) = \{A\subseteq X: A\ \mbox{is nonempty, $(w\mbox{-})$compact (and convex)}\}.
	\end{array}
\end{equation*}

A multifunction (set-valued function) $F:\Omega\rightarrow P_f(X)$ is said to be ``measurable", if for all $x\in  X$ the function
\begin{equation*}
	\omega\mapsto d(x, F(\omega))=\inf\left\{||x-v||_X:v\in F(\omega)\right\}
\end{equation*}
is measurable. If $F(\cdot)$ is measurable, then
\begin{equation*}
	{\rm Gr}\,F=\{(\omega,v)\in\Omega\times X:x\in F(\omega)\}\in\Sigma\otimes B(X)
\end{equation*}
and the converse is true if $\Sigma$ is $\mu$-complete. In general, a multifunction $F:\Omega\rightarrow2^X\backslash\{\emptyset\}$ is said to be ``graph measurable", if ${\rm Gr}\,F\in\Sigma\otimes B(X)$.

Given $1\leq p\leq\infty$ and a multifunction $F:\Omega\rightarrow 2^X\backslash\{\emptyset\}$, we introduce the set
\begin{equation*}
	S^p_F=\{f\in L^p(\Omega, X):f(\omega)\in F(\omega)\ \mu\mbox{-a.e.}\ \mbox{in}\ \Omega\}.
\end{equation*}
This set can be empty. For a graph measurable multifunction $F(\cdot)$, $S^p_F\neq\emptyset$ if and only if $w\mapsto \inf\left\{||v||_X:v\in F(\omega)\right\}$ belongs in $L^p(\Omega)$. The set $S^p_F$ is ``decomposable", in the sense that for every $(A,f,g)\in\Sigma\times S^p_F\times S^p_F$ we have
\begin{equation*}
	\chi_Af + \chi_{A^c}g\in S^p_F.
\end{equation*}
Here, for every $C\in\Sigma,\ \chi_C$ is the characteristic function of $C$ and $C^c$ is the complement of $C$ (that is, $C^c=X\backslash C$). Since $\chi_{A^c}=1-\chi_A$, the notion of decomposability formally looks like that of convexity. Only now the coefficients are functions. Nevertheless, decomposability is a good substitute of convexity and several results valid for convex sets have their counterparts for decomposable sets (see Fryszkowski \cite{16}).

Let $Y$ be Hausdorff topological space and $G:Y\rightarrow 2^X\backslash\{\emptyset\}$ a multifunction. We say that $G(\cdot)$ is ``lower semicontinuous" (lsc for short), resp. ``upper semicontinuous" (usc for short), if for every $U\subseteq X$ open, the set
\begin{equation*}
	\begin{array}{ll}
		F^-(U) = \{y\in Y:F(y)\cap U\neq\emptyset\}\\
		\mbox{resp.}\ F^+(U)=\{y\in Y:F(y)\subseteq U\}
	\end{array}
\end{equation*}
is open.

Recall that on $P_f(X)$ we can define a generalized metric, known as the ``Hausdorff metric", by
\begin{equation*}
	h(A,C)=\max\left\{\sup\{d(a,C):a\in A\},\ \sup\{d(c,A):c\in C\}\right\}
\end{equation*}
for all $A,C\in P_f(X)$. A multifunction $F:Y\rightarrow P_f(X)$ is said to be ``$h$-continuous", if it is continuous from $Y$ into the metric space $(P_f(X),h)$.

The next theorem, due to Bressan \& Colombo \cite{6} and Fryszkowski \cite{15}, is an illustration of how decomposability can serve as a substitute of convexity. It extends the celebrated Michael selection theorem.

\begin{theorem}\rm{ (See e.g. \cite{20})}\label{th2}
	If $Y$ is a separable metric space and $F:Y\rightarrow P_f(L^1(\Omega,X))$ is a lsc multifunction with decomposable values, then there exists a continuous map $g:Y\rightarrow L^1(\Omega,X)$ such that
	\begin{equation*}
		g(y)\in F(y)\ \mbox{for all}\ y\in Y.
	\end{equation*}
\end{theorem}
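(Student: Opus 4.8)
Since this is the Bressan--Colombo/Fryszkowski selection theorem, I would follow the classical route: build the selection as a uniform limit of continuous ``approximate selections'', the whole construction resting on one approximation lemma, call it $(\star)$: \emph{if $\varphi\colon Y\to L^1(\Omega,X)$, $\varepsilon\colon Y\to(0,\infty)$ and $\eta\colon Y\to(0,\infty)$ are continuous and $d(\varphi(y),F(y))<\varepsilon(y)$ for all $y$, then there is a continuous $\psi\colon Y\to L^1(\Omega,X)$ with $d(\psi(y),F(y))<\eta(y)$ and $\|\psi(y)-\varphi(y)\|_{L^1}<\varepsilon(y)+\eta(y)$ for all $y\in Y$.}

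Granting $(\star)$, the theorem follows by iteration. The function $y\mapsto d(0,F(y))$ is finite (take any element of $F(y)$) and upper semicontinuous --- $\{y:F(y)\cap B_\lambda(0)\neq\emptyset\}=F^-(B_\lambda(0))$ is open because $F$ is lsc --- hence dominated by some continuous $\varepsilon_0\colon Y\to(0,\infty)$, and $(\star)$ with $\varphi\equiv0$, this $\varepsilon_0$ and $\eta\equiv1$ produces a continuous $g_0$ with $d(g_0(y),F(y))<1$. Inductively, given a continuous $g_n$ with $d(g_n(y),F(y))<2^{-n}$, apply $(\star)$ with $\varphi=g_n$, $\varepsilon\equiv2^{-n}$, $\eta\equiv2^{-(n+1)}$ to obtain a continuous $g_{n+1}$ with $d(g_{n+1}(y),F(y))<2^{-(n+1)}$ and $\|g_{n+1}(y)-g_n(y)\|_{L^1}<2^{-n}+2^{-(n+1)}$. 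These bounds are summable uniformly in $y$, so $(g_n)$ converges uniformly to a continuous $g\colon Y\to L^1(\Omega,X)$; since $|d(g(y),F(y))-d(g_n(y),F(y))|\le\|g(y)-g_n(y)\|_{L^1}\to0$ we get $d(g(y),F(y))=0$, hence $g(y)\in F(y)$ as $F(y)$ is closed. Only metrizability of $Y$ and completeness of $L^1(\Omega,X)$ are used here.

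The substance is the proof of $(\star)$, where decomposability takes over the role of convexity. For each $y_0\in Y$ choose $f_{y_0}\in F(y_0)$ with $\|f_{y_0}-\varphi(y_0)\|_{L^1}<\varepsilon(y_0)$. Since $\varphi,\varepsilon,\eta$ are continuous and $z\mapsto d(f_{y_0},F(z))$ is upper semicontinuous (again by lsc of $F$) and vanishes at $y_0$, there is an open $V_{y_0}\ni y_0$ on which $\|f_{y_0}-\varphi(z)\|_{L^1}<\varepsilon(z)+\tfrac12\eta(z)$ and $d(f_{y_0},F(z))<\tfrac12\eta(z)$. As $Y$ is a separable metric space it is paracompact, so fix a locally finite partition of unity $\{p_i\}_{i\in I}$ subordinate to $\{V_{y_i}\}_{i\in I}$. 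Fix $y\in Y$, let $i_1,\dots,i_n$ be the indices with $p_{i_k}(y)>0$, set $f_k=f_{y_{i_k}}$ (so $\|f_k-\varphi(y)\|_{L^1}<\varepsilon(y)+\tfrac12\eta(y)$ and $d(f_k,F(y))<\tfrac12\eta(y)$), and pick $h_k\in F(y)$ with $\|f_k-h_k\|_{L^1}<\tfrac12\eta(y)$. Because $\mu$ is nonatomic, Lyapunov's convexity theorem yields a measurable partition $\{\Omega_1,\dots,\Omega_n\}$ of $\Omega$ with
\[
\int_{\Omega_k}|f_k-\varphi(y)|\,d\mu=p_{i_k}(y)\,\|f_k-\varphi(y)\|_{L^1}\quad\text{and}\quad\int_{\Omega_k}|f_k-h_k|\,d\mu=p_{i_k}(y)\,\|f_k-h_k\|_{L^1}
\]
for every $k$ (apply Lyapunov to the nonatomic finite vector measure $B\mapsto\big(\int_B|f_k-\varphi(y)|\,d\mu,\ \int_B|f_k-h_k|\,d\mu\big)_{k=1}^n$ and peel off the $\Omega_k$ one at a time, each restriction staying nonatomic). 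Put $\psi(y)=\sum_{k=1}^n\chi_{\Omega_k}f_k$. By decomposability of $F(y)$ we have $\sum_{k=1}^n\chi_{\Omega_k}h_k\in F(y)$, so
\[
d(\psi(y),F(y))\le\sum_{k=1}^n\int_{\Omega_k}|f_k-h_k|\,d\mu=\sum_{k=1}^n p_{i_k}(y)\,\|f_k-h_k\|_{L^1}<\tfrac12\eta(y)<\eta(y),
\]
and in the same way $\|\psi(y)-\varphi(y)\|_{L^1}=\sum_{k=1}^n p_{i_k}(y)\,\|f_k-\varphi(y)\|_{L^1}<\varepsilon(y)+\tfrac12\eta(y)<\varepsilon(y)+\eta(y)$, which are the two required estimates.

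The remaining --- and genuinely hard --- point is the continuity of $y\mapsto\psi(y)$. On each member of the locally finite cover the finite index set and the elements $f_k$ are frozen, but the partition $\{\Omega_k\}$ depends on $y$ both through the weights $p_{i_k}(y)$ and through the auxiliary near-points $h_k\in F(y)$, so to make $\psi$ continuous the Lyapunov splitting and the choice of the $h_k$ must be carried out \emph{simultaneously and continuously} in the parameter $y$. This requires upgrading Lyapunov's theorem to a parametrized, ``sliding'' selection --- realizing the sets $\Omega_k$ as monotone families of sublevel sets of a fixed family of functions, depending continuously on the weights --- compatible with the locally finite partition of unity; it is precisely here that the decomposable structure, hence the nonatomicity of $\mu$ via Lyapunov, is indispensable, mere closedness of the values being insufficient (as the failure of Michael's theorem for non-convex values shows). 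Once each local piece is verified to be continuous, the pieces agree on overlaps through the partition of unity and assemble into the global continuous selection $\psi$, which finishes $(\star)$ and the theorem. I expect this continuity/parametrized-Lyapunov step to be the main obstacle; the rest is the bookkeeping above.
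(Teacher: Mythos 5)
The paper offers no proof of this statement --- it is quoted as the Bressan--Colombo/Fryszkowski selection theorem with a pointer to \cite{20} --- so there is nothing in-paper to compare against, and your attempt has to stand on its own. Its skeleton is the correct classical one: the Michael-type iteration reducing everything to the approximation lemma $(\star)$, upper semicontinuity of $y\mapsto d(\varphi(y),F(y))$ extracted from lower semicontinuity of $F$, a locally finite partition of unity on the paracompact space $Y$, and the replacement of convex combinations by ``decomposable combinations'' $\sum_k\chi_{\Omega_k}f_k$ whose weights $p_{i_k}(y)$ are realized through a Lyapunov partition. You are also right, and it is worth saying explicitly, that nonatomicity of $\mu$ must be added to the hypotheses as stated: on an atom every closed subset of $L^1(\Omega,X)\cong X$ is decomposable and the claim would reduce to Michael's theorem without convexity, which is false; in the paper's application $\mu$ is Lebesgue measure on $[0,b]$, so nothing is lost.

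The genuine gap is the one you flag yourself and then do not close: continuity of $\psi$. As written, for each fixed $y$ the near-points $h_k\in F(y)$ and the partition $\{\Omega_k\}$ are produced by separate pointwise existence arguments, so $\psi$ is merely a pointwise-defined map and nothing prevents $y\mapsto\sum_k\chi_{\Omega_k(y)}f_k$ from jumping; without continuity of each $\psi=g_{n+1}$ the whole iteration collapses. Closing this is the actual content of the theorem and needs two ingredients you only name: (i) a parametrized Lyapunov lemma --- for a finite family of nonatomic measures $\mu_1,\dots,\mu_p$ there is an increasing family $\{A_t\}_{t\in[0,1]}$ with $\mu_j(A_t)=t\,\mu_j(\Omega)$ for all $j$, so that one can take $\Omega_k(y)=A_{s_k(y)}\setminus A_{s_{k-1}(y)}$ with $s_k(y)=\sum_{l\le k}p_{i_l}(y)$, which varies continuously with $y$ in $L^1$ --- and (ii) a cover-compatible, consistent choice of the finitely many measures $|f_k-\varphi(y)|\,d\mu$, $|f_k-h_k|\,d\mu$ entering that lemma on overlaps of the $V_{y_i}$, which forces the $h_k$ themselves to be chosen per covering element rather than per point. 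Until those are supplied, the argument establishes existence of a (possibly discontinuous) approximate selection only, and the proof is incomplete at its decisive step; everything surrounding that step is sound.
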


Now, let $\Omega=T=[0,b]$. On $L^1(T,X)$ we introduce the ``weak norm" defined by
\begin{equation*}
	||u||_w=\sup\left\{||\int^t_s u(\tau)d\tau||_X:0\leq s\leq t\leq b\right\}
\end{equation*}
or, equivalently, by
\begin{equation*}
	||u||_w=\sup\left\{||\int^t_0 u(\tau)d\tau||_X:0\leq t\leq b\right\}.
\end{equation*}

From Hu \& Papageorgiou \cite[p. 24]{21}, we have the following result.

\begin{prop}\rm{(See \cite{21})}\label{prop3}
		If $X$ is reflexive, $\{u_n,u\}_{n\geq1}\subseteq L^p(T,X),\ 1\leq p<\infty$, $ u_n\xrightarrow{||\cdot||_w}u$ and
	\begin{equation*}
		\sup_{n\geq1}||u_n||_p<\infty,
	\end{equation*}
	then $u_n\xrightarrow{w}u\ \mbox{in}\ L^p(T,X)$.
\end{prop}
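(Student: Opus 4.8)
The plan is to reduce the claim to weak convergence testing against functionals in $L^{p'}(T,X^*)$, and to exploit the density of step functions in this dual space. First I would recall that since $X$ is reflexive and separable, $L^p(T,X)$ is reflexive, so the sequence $\{u_n\}$, being bounded in $L^p(T,X)$ by hypothesis, is relatively weakly sequentially compact in $L^p(T,X)$. Thus it suffices to show that every weakly convergent subsequence $u_{n_k}\xrightarrow{w}v$ has the same limit $v=u$; then a standard subsequence argument upgrades this to $u_n\xrightarrow{w}u$ along the whole sequence.

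The key step is to identify $v$ with $u$. Passing to the weakly convergent subsequence, for each $t\in[0,b]$ the bounded linear functional on $L^p(T,X)$ given by $g\mapsto \int_0^t g(\tau)\,d\tau$ (composed with any fixed $x^*\in X^*$) yields $\int_0^t u_{n_k}(\tau)\,d\tau \xrightarrow{w} \int_0^t v(\tau)\,d\tau$ in $X$. On the other hand, the hypothesis $u_n\xrightarrow{||\cdot||_w}u$ gives $\int_0^t u_{n_k}(\tau)\,d\tau \to \int_0^t u(\tau)\,d\tau$ strongly in $X$, hence also weakly. By uniqueness of weak limits in $X$, we get $\int_0^t v(\tau)\,d\tau = \int_0^t u(\tau)\,d\tau$ for all $t\in[0,b]$. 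Differentiating (Lebesgue differentiation theorem for the Bochner integral, valid since $v,u\in L^1(T,X)$), we conclude $v(t)=u(t)$ for almost all $t\in T$, i.e. $v=u$ in $L^p(T,X)$.

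The main obstacle — really the only subtlety — is justifying that weak convergence in $L^p(T,X)$ is indeed detected by the family of functionals $g\mapsto (x^*,\int_0^t g(\tau)\,d\tau)$ for $x^*\in X^*$, $t\in[0,b]$, so that the identity of the integrals of $v$ and $u$ on every interval is enough to force $v=u$. This is handled not by claiming these functionals are weakly dense, but by the cleaner route above: we do not need to test against all of $L^{p'}(T,X^*)$, only enough to separate points, and equality of all indefinite integrals already does that via Lebesgue differentiation. One should also note that $X$ reflexive guarantees the Bochner integral behaves well and that $L^p(T,X)$ is reflexive so the weak compactness argument applies; these are exactly the places the reflexivity hypothesis is used. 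Everything else is routine, so the argument is short once this identification is in place.
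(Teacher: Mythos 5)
The paper offers no proof of this proposition — it is quoted verbatim from Hu \& Papageorgiou \cite{21} — so your argument has to stand on its own. For $1<p<\infty$ it does: reflexivity of $X$ makes $L^p(T,X)$ reflexive (separability is not actually needed for that), so the bounded sequence is relatively weakly sequentially compact; the map $g\mapsto\int_0^tg(\tau)d\tau$ is a bounded linear operator into $X$, hence weak-to-weak continuous, so any weak cluster point $v$ satisfies $\int_0^tv\,d\tau=\int_0^tu\,d\tau$ for all $t$ by uniqueness of weak limits in $X$; Lebesgue differentiation for Bochner integrals then gives $v=u$, and the subsequence principle upgrades this to weak convergence of the whole sequence. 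That is a clean and complete argument on that range of $p$.

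The gap is at $p=1$, which the statement explicitly includes. $L^1(T,X)$ is not reflexive, and an $L^1$-bound does not yield relative weak compactness there — by the Dunford--Pettis theorem one needs uniform integrability — so the very first step of your proof fails. The failure is not merely technical: take $X=\RR$, $T=[0,1]$ and $u_n=\sum_{k=0}^{n-1}n\left(\chi_{[k/n,\,k/n+1/(2n^2)]}-\chi_{[k/n+1/(2n^2),\,k/n+1/n^2]}\right)$. Then $\|u_n\|_1=1$ and $\|u_n\|_w\le 1/(2n)\rightarrow0$, yet $\{u_n\}_{n\ge1}$ is not uniformly integrable and therefore cannot converge weakly in $L^1(T)$ (a weakly convergent sequence in $L^1$ is relatively weakly compact, hence uniformly integrable). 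So for $p=1$ the hypothesis $\sup_{n\ge1}\|u_n\|_1<\infty$ must be read as (or replaced by) uniform integrability, as in the cited source; with that substitution your scheme goes through verbatim, Dunford--Pettis playing the role of reflexivity. In the places where the paper actually uses the proposition with $p=1$ (e.g.\ Proposition \ref{prop10}), the controls are dominated by $a_0\in L^\infty(T)$, so uniform integrability is automatic and nothing is lost in the applications — but your proof, like the statement as transcribed, does not cover bare $L^1$-boundedness.
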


Let $(\Omega,\Sigma,\mu)$ be a complete finite measure space and $Y$ a Polish space. Recall that this means that $Y$ is a separable Hausdorff topological space and there is a metric $d$ on $Y$ compatible with the topology of $Y$ such that $(Y,d)$ is complete. By $P(Y)$ we denote the set of all probability measures on $Y$ endowed with the narrow topology $\tau_n$ (see Papageorgiou \& Winkert \cite[p. 375]{23}). Let $B(Y)$ be a Borel $\sigma$-field on $Y$, $ca(\Sigma\otimes B(Y)$  the space of all $\RR$-valued signed measures on $\Sigma\otimes B(Y)$ and $p_\Omega:\Omega\times Y\rightarrow\Omega$ the projection map. Given $\lambda\in ca(\Sigma\otimes B(Y))$ and if $\mu=\lambda\circ p^{-1}_\Omega$, 
 the disintegration theorem says that there exists a $\Sigma$-measurable map $\hat\lambda:\Omega\rightarrow P(Y)$ such that
\begin{equation*}
	\lambda(A\times C) = \int_A\hat\lambda(\omega)(C)d\mu\ \mbox{for all}\ A\in\Sigma,\  C\in B(Y).
\end{equation*}

By $ca_+(\Sigma\otimes B(Y))$ we denote the $\RR_+$-valued elements of $ca(\Sigma\times B(Y))$.

A ``Young measure" on $\Omega\times Y$ is a $\lambda\in ca_+(\Sigma\otimes B(Y))$ such that
\begin{equation*}
	\mu = \lambda\circ p^{-1}_\Omega
\end{equation*}
(that is, $\mu(A)=\lambda(A\times Y)$ for all $A\in\Sigma$). The space of Young measures on $\Omega\times Y$ is denoted by ${\mathcal Y}(\Omega\times Y)$. On account of the disintegration theorem mentioned above, we can identify $\lambda\in {\mathcal Y}(\Omega\times Y)$ with its disintegration $\hat\lambda(\cdot)$. So, we say that a Young measure is a measurable map $\hat\lambda:\Omega\rightarrow P(Y)$. Such maps are also known as ``transition measures". The space of transition measures is denoted by $R(\Omega, Y)$. We know that the following statements are equivalent:
\begin{itemize}
	\item [(a)] $\hat\lambda\in\RR(\Omega,Y)$.
	\item [(b)] For every $C\in B(Y),$ the map $\omega\mapsto\xi_C(\omega)=\hat\lambda(\omega)(C)$ is $\Sigma$-measurable (see Papageorgiou \& Winkert \cite[p. 387]{23}). Given a $\Sigma$-measurable function $u:\Omega\rightarrow Y$, the ``Young measure associated with $u$", is the transition probability defined by
		\begin{equation*}
			\hat\lambda^u(\omega) = \delta_{u(\omega)}
		\end{equation*}
		with $\delta_{u(\omega)}(\cdot)$ being the Dirac measure defined by
		\begin{equation*}
			\delta_{u(\omega)}(C)=\left\{
				\begin{array}{ll}
					1 & \mbox{if}\ u(\omega)\in C\\
					0 & \mbox{otherwise}
				\end{array}
			\right.
			\mbox{for all}\ C\in B(Y).
		\end{equation*}
\end{itemize}

Let $\varphi:\Omega\times Y\rightarrow\RR$ be a ``Carath\'eodory function", that is, for all $y\in Y$ the mapping $\omega\mapsto \varphi(\omega,y)$ is $\Sigma$-measurable and for $\mu$-a.e. $\omega\in\Omega$ the mapping $y\mapsto\varphi(\omega,y)$ is continuous. We know that such function is $\Sigma\otimes B(Y)$-measurable (see Hu \& Papageorgiou \cite[p. 142]{20}). We say that a Carath\'eodory function is ``$L^1$-bounded", if there exists $k\in L^1(\Omega)$ such that $|\varphi(\omega,y)|\leq k(\omega)$ $\mu$-a.e. on $\Omega$, for all $y\in Y$. By ${\rm Car}_b(\Omega\times Y)$ we denote the space of all $L^1$-bounded Carath\'eodory functions.

The ``Young narrow topology" on ${\mathcal Y}(\Omega\times Y)$, is the weakest topology on ${\mathcal Y}(\Omega\times Y)$ for which the maps
\begin{equation*}
	\lambda\mapsto I_\varphi(\lambda)=\int_{\Omega\times Y}\varphi(\omega,y) d\lambda = \int_\Omega\left(\int_Y\varphi(\omega,y)\hat\lambda(\omega)(dy)\right)d\mu
\end{equation*}
with $\hat\lambda(\cdot)$ being the disentegration of $\lambda$ and $\varphi\in {\rm Car}_b(\Omega\times Y)$, are all continuous. This topology on ${\mathcal Y}(\Omega\times Y)$ is denoted by $\tau^y_n$.

Now we introduce the hypotheses on the data of problem (\ref{eq1}).

\smallskip
$H(A)$: $A:D(A)\subseteq\RR^N\rightarrow2^{\RR^N}$ is a maximal monotone map such that $0\in A(0)$.

\begin{remark}\label{rem1}
	We do not require that $D(A)=\RR^N$. In this way our framework incorporates systems with unilateral constraints (differential variational inequalities).
\end{remark}

$H(f)$: $f:T\times\RR^N\rightarrow\mathcal{L}(\RR^m,\RR^N)$ is a map such that
\begin{itemize}
	\item [(i)] for all $x\in\RR^N$ and all $u\in\RR^m,\ t\mapsto f(t,x)u$ is measurable;
	\item [(ii)] for every $r>0$, there exists $l_r\in L^1(T)$ such that
		\begin{equation*}
			||f(t,x) - f(t,y)||_\mathcal{L} \leq l_r(t)|x-y|\ \mbox{for almost all}\ t\in T  \ \mbox{and all}\ |x|,|y|\leq r;
		\end{equation*}
	\item [(iii)] $||f(t,x)||_\mathcal{L} \leq a(t)(1+|x|)\ \mbox{for almost all}\ t\in T  \ \mbox{and all}\ x\in\RR^N,\ \mbox{with}\ a\in L^1(T).$
\end{itemize}

\smallskip
$H(U)$: $U:T\times\RR^N\rightarrow P_k(\RR^m)$ is a multifunction such that
\begin{itemize}
	\item [(i)] for all $x\in\RR^N,\ t\mapsto U(t,x)$ is measurable;
	\item [(ii)] there exists $k\in L^\infty(T)$ such that
		\begin{equation*}
			h(U(t,x), U(t,y)) \leq k(t)|x-y|\ \mbox{for almost all}\ t\in T  \ \mbox{and all}\ x,\,y\in\RR^N;
		\end{equation*}
	\item [(iii)] $|U(t,x)|=\sup\{|u|:u\in U(t,x)\}\leq a_0(t)$ for almost all $t\in T$   and all $x\in\RR^N$, with $a_0\in L^\infty(T)$.
\end{itemize}

\smallskip
$H(L)$: $L:T\times\RR^N\times\RR^m\rightarrow\RR$ is a function such that
\begin{itemize}
	\item [(i)] for all $(x,u)\in\RR^N\times\RR^m,\ t\mapsto L(t,x,u)$ is measurable;
	\item [(ii)] for every $r>0$, there exists $\vartheta_r\in L^1(T)$ such that
		\begin{equation*}
			|L(t,x,u) - L(t,y,v)|\leq \vartheta_r(t)(|x-y| + |u-v|)
		\end{equation*}
		for almost all $t\in T$   and all $|x|,\,|y|,\,|u|,\,|v|\leq r$;
	\item [(iii)] for every $r>0$, there exists $a_r\in L^1(T)$ such that
		\begin{equation*}
			|L(t,x,u)|\leq a_r(t)\ \mbox{for almost all}\ t\in T   \ \mbox{and all}\ |x|,\,|u|\leq r.
		\end{equation*}
\end{itemize}

\smallskip
$H_0$: $x_0\in\overline{D(A)}$.

\smallskip
We introduce the ``convexified" dynamics of problem (\ref{eq1}), namely the following control system
\begin{equation}\label{eq2}
	\left\{
		\begin{array}{ll}
			-x'(t)\in A(x(t)) + f(t,x(t))u(t)\ \mbox{for almost all}\ t\in T=[0,b],\\
			x(0)=x_0, u(t)\in {\rm conv}\,  U(t,x(t))\ \mbox{for almost all}\ t\in T.
		\end{array}
	\right\}
\end{equation}

Then we define the following two sets:
\begin{equation*}
	\begin{array}{ll}
		P=\{(x,u)\in C(T,\RR^N)\times L^1(T,\RR^m):\ \mbox{$(x,u)$ is an admissible state-control pair for (\ref{eq1})}\},\\
		P_c=\{(x,u)\in C(T,\RR^N)\times L^1(T,\RR^m):\ \mbox{$(x,u)$ is an admissible state-control pair for (\ref{eq2})}\}.
	\end{array}
\end{equation*}

Also we set
\begin{equation*}
	\mathscr{S}={\rm proj}_{C(T,\RR^N)}P\ \mbox{and}\ \mathscr{S}_c={\rm proj}_{C(T,\RR^N)}P_c.
\end{equation*}

These are the sets of admissible trajectories for the original system (the set $\mathscr{S}$) and for the convexified system (the set $\mathscr{S}_c$).

\section{First relaxation method}

The first relaxation method is motivated by the ``reduction method" of the existence theory and it uses the convexified control system (\ref{eq2}).

\begin{prop}\label{prop4}
	If hypotheses $H(A), H(f), H(U), H_0$ hold, then $P\neq\emptyset$ and there exists $\hat{c}>0$ such that $||x||_{C(T,\RR^N)}\leq\hat{c}$ for all $x\in \mathscr{S}_c$.
\end{prop}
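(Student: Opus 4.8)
The two assertions have rather different characters, and I would dispose of the uniform bound first. If $(x,u)\in P_c$ then $u(t)\in{\rm conv}\,U(t,x(t))$ a.e., and since the supremum of the Euclidean norm over a convex hull equals its supremum over the set, $|u(t)|\le|{\rm conv}\,U(t,x(t))|=|U(t,x(t))|\le a_0(t)$ a.e. by $H(U)(iii)$; hence $g(t):=f(t,x(t))u(t)$ satisfies $|g(t)|\le\|a_0\|_\infty a(t)(1+|x(t)|)$ a.e. by $H(f)(iii)$. Pairing the inclusion $-x'(t)\in A(x(t))+g(t)$ with $x(t)$ in $\RR^N$ and using $0\in A(0)$ together with monotonicity (so $(\xi,x(t))_{\RR^N}\ge0$ whenever $\xi\in A(x(t))$), one gets $\frac{d}{dt}|x(t)|\le|g(t)|$ for a.e. $t\in T$, hence $|x(t)|\le|x_0|+\|a_0\|_\infty\int_0^t a(\tau)(1+|x(\tau)|)\,d\tau$. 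Gronwall's inequality then gives $\|x\|_{C(T,\RR^N)}\le(1+|x_0|)\exp(\|a_0\|_\infty\|a\|_{L^1(T)})=:\hat c$, independent of $(x,u)$. This settles the second assertion once $\mathscr{S}_c\neq\emptyset$, which follows from $P\neq\emptyset$ because every admissible pair of \eqref{eq1} is admissible for \eqref{eq2} (so $\mathscr{S}\subseteq\mathscr{S}_c$).

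For $P\neq\emptyset$ I would reduce \eqref{eq1} to a fixed point problem in $Y=C(T,\RR^N)$. By $H(U)$ the multifunction $x\mapsto\Gamma(x):=S^1_{U(\cdot,x(\cdot))}\subseteq L^1(T,\RR^m)$ has nonempty (by $H(U)(iii)$), closed, decomposable values, and it is lower semicontinuous: if $x_n\to x$ in $C(T,\RR^N)$ then $H(U)(ii)$ gives $h(U(t,x_n(t)),U(t,x(t)))\le\|k\|_\infty\|x_n-x\|_\infty\to0$ uniformly in $t$, so for each $v\in\Gamma(x)$ one produces via measurable selection $v_n\in\Gamma(x_n)$ with $|v_n(t)-v(t)|\le\|k\|_\infty\|x_n-x\|_\infty+\tfrac1n$, whence $v_n\to v$ in $L^1(T,\RR^m)$. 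Theorem~\ref{th2} then yields a continuous $\gamma:C(T,\RR^N)\to L^1(T,\RR^m)$ with $\gamma(x)(t)\in U(t,x(t))$ a.e. Next I define $\mathcal N:C(T,\RR^N)\to C(T,\RR^N)$ by letting $\mathcal N(x)$ be the unique solution of $-y'(t)\in A(y(t))+f(t,x(t))\gamma(x)(t)$ a.e., $y(0)=x_0$; this is well posed since $t\mapsto f(t,x(t))\gamma(x)(t)$ is measurable (by $H(f)(i)$ and approximation of $x$ by step functions) and dominated by $\|a_0\|_\infty a(\cdot)(1+\|x\|_\infty)\in L^1(T)$, and $x_0\in\overline{D(A)}$ by $H_0$. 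From the contraction estimate $|\mathcal N(x_1)(t)-\mathcal N(x_2)(t)|\le\int_0^t|f(\tau,x_1(\tau))\gamma(x_1)(\tau)-f(\tau,x_2(\tau))\gamma(x_2)(\tau)|\,d\tau$, $H(f)(ii),(iii)$ and continuity of $\gamma$, the map $\mathcal N$ is continuous; and it sends bounded sets to relatively compact subsets of $C(T,\RR^N)$, since on such a set the driving terms $f(\cdot,x(\cdot))\gamma(x)(\cdot)$ are dominated by one fixed $L^1$ function, hence uniformly integrable, so the corresponding solution set of the maximal monotone evolution inclusion is relatively compact in $C(T,\RR^N)$. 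Thus $\mathcal N$ is a compact map in the sense of Theorem~\ref{th1}.

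I would then apply the Leray--Schauder alternative (Theorem~\ref{th1}) to $\mathcal N$. If $x=\lambda\mathcal N(x)$ for some $0<\lambda<1$, write $z=\mathcal N(x)$, so $x=\lambda z$ and $-z'(t)\in A(z(t))+f(t,\lambda z(t))\gamma(\lambda z)(t)$ a.e., $z(0)=x_0$. Pairing with $z(t)$, using $0\in A(0)$, $0<\lambda<1$ and $|\gamma(\lambda z)(t)|\le a_0(t)$ exactly as in the first paragraph, we obtain $\|z\|_{C(T,\RR^N)}\le\hat c$, hence $\|x\|_{C(T,\RR^N)}=\lambda\|z\|_{C(T,\RR^N)}\le\hat c$. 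So the set $S$ in Theorem~\ref{th1} is bounded, alternative (a) fails, and $\mathcal N$ has a fixed point $x^*\in C(T,\RR^N)$. Then $-(x^*)'(t)\in A(x^*(t))+f(t,x^*(t))\gamma(x^*)(t)$ a.e., $x^*(0)=x_0$, and $\gamma(x^*)(t)\in U(t,x^*(t))$ a.e., so $(x^*,\gamma(x^*))\in P$ and $P\neq\emptyset$; combined with $\mathscr{S}\subseteq\mathscr{S}_c$ and the bound $\hat c$ above, this completes the proof.

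The step I expect to be the crux is the well-posedness and, above all, the compactness of $\mathcal N$: because $H_0$ gives only $x_0\in\overline{D(A)}$ and not $x_0\in D(A)$, one needs the (standard but delicate) a priori estimates and compactness-of-solution-set theory for inclusions $-y'\in A(y)+g$ with $A$ maximal monotone and $g\in L^1$, which I would quote rather than reprove; some care is also needed to verify the measurability of $t\mapsto f(t,x(t))$ and the lower semicontinuity of $\Gamma$ directly from the Carath\'eodory and Hausdorff--Lipschitz hypotheses $H(f)$ and $H(U)$.
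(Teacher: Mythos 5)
Your proof is correct and follows essentially the same route as the paper: a continuous selection obtained from Theorem \ref{th2} applied to a decomposable-valued lsc multifunction, composed with the solution map of $-x'\in A(x)+h$ (B\'enilan--Brezis for well-posedness, Baras for compactness), then the Leray--Schauder alternative with the a priori bound coming from $0\in A(0)$, monotonicity and Gronwall. The only real difference is organizational: you select the control $\gamma(x)\in S^1_{U(\cdot,x(\cdot))}$ directly, whereas the paper selects from the Nemitsky operator of the orientor field $F(t,x)=f(t,x)U(t,x)$ and must therefore invoke the Yankov--von Neumann--Aumann theorem at the end to recover $\hat u$ from $g(\hat x)$ --- a step your variant avoids.
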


\begin{proof}
	Consider the following orientor field
	\begin{equation*}
		F(t,x) = f(t,x)U(t,x) = \bigcup_{u\in U(t,x)} f(t,x)u\in P_k(\RR^N).
	\end{equation*}
	
	\begin{claim}\label{claim1}
		For every $x\in\RR^N$, the map $ t\mapsto F(t,x)$ is measurable.
	\end{claim}
	
	We fix $x\in\RR^N$ and consider the multifunction $t\rightarrow U(t,x)$. The measurability of this multifunction (see hypothesis $H(U)(i)$) implies that we can find a sequence $\{u^x_n\}_{n\geq1}\subseteq S_{U(\cdot,x)}=\{u:T\rightarrow\RR^m$ measurable and $u(t)\in U(t,x)$ for almost all $t\in T\}$ such that
	\begin{equation*}
		U(t,x)=\{\overline{u^x_n(t)}\}_{n\geq1}\ \mbox{for almost all}\ t\in T
	\end{equation*}
	(see Hu \& Papageorgiou \cite[Theorem 24, p.156]{20}). Then we have
	\begin{equation*}
		\begin{array}{ll}
			& F(t,x) = f(t,x)\{\overline{u^x_n(t)}\}_{n\geq1} = \{\overline{f(t,x)u^x_n(t)}\}_{n\geq1},\\
			\Rightarrow & t\mapsto F(t,x)\ \mbox{is measurable}.
		\end{array}
	\end{equation*}
	
	This proves Claim \ref{claim1}.

	\begin{claim}\label{claim2}
		For almost all $t\in T$, the map $ x\mapsto F(t,x)$ is locally $h$-Lipschitz.
	\end{claim}

	Let $r>0$ and consider $x,y\in\RR^N$ such that $|x|,|y|\leq r$. If $\xi\in F(t,x)$, then
	\begin{equation*}
		\xi = f(t,x)u\ \mbox{where}\ u\in U(t,x).
	\end{equation*}

	Given $\epsilon>0$, we choose $v\in U(t,y)$ such that
	\begin{equation}\label{eq3}
		\begin{array}{ll}
			|u-v| & \leq d(u,U(t,y)) + \epsilon\\
				& \leq h(U(t,x), U(t,y)) +\epsilon\\
				& \leq k(t)|x-y| +\epsilon\ \mbox{for almost all}\ t\in T\\
				& \mbox{(see hypothesis $H(U)(ii)$)}.
		\end{array}
	\end{equation}

	Then we have
	\begin{equation*}
		\begin{array}{lll}
			d(\xi, F(t,y)) & =& d(f(t,x)u, F(t,y)) \\
			& \leq& |f(t,x)u -  f(t,y)v| \\
			& \leq& ||f(t,x)||_\mathcal{L}|u-v| + |f(t,x)v-f(t,y)v| \\
			& \leq& a_r(t)|u-v| + l_r(t)|x-y|\ (\mbox{with}\ a_r(\cdot)=a(\cdot)(1+r)) \\
			& \leq& [a_r(t)k(t) + l_r(t)]|x-y| + a_r(t)\epsilon\ \mbox{for almost all}\ t\in T\ \mbox{(see (\ref{eq3}))} \\
			 &\Rightarrow & h(F(t,x), F(t,y)) \leq k_r(t)|x-y| + a_r(t)\epsilon\\
			&& \mbox{for almost all}\ t\in T,\ \mbox{where}\ k_r\in L^1(T).
		\end{array}
	\end{equation*}

	Let $\epsilon\rightarrow0^+$, and
	conclude that for almost all $t\in T$, the multifunction $x\rightarrow F(t,x)$ is locally $h$-Lipschitz. This proves Claim \ref{claim2}.

\smallskip
	From Claims \ref{claim1} and \ref{claim2} it follows that $(t,x)\rightarrow F(t,x)$ is measurable (see Hu \& Papageorgiou \cite[Proposition 7.9, p.229]{20}). Thus, we can infer that for every measurable function $x:T\rightarrow\RR^N$, the multifunction $t\rightarrow F(t, x(t))$ is measurable (superpositional measurability).

	We consider the following multivalued Cauchy problem:
	\begin{equation}\label{eq4}
		-x'(t)\in A(x(t)) + F(t,x(t))\ \mbox{for almost all}\ t\in T,\ x(0)=x_0.
	\end{equation}

	We will prove
	 the existence of solutions for this problem. To this end, let $h\in L^1(T,\RR^N)$ and consider the following Cauchy problem:
	\begin{equation}\label{eq5}
		-x'(t)\in A(x(t)) + h(t)\ \mbox{for almost all}\ t\in T,\ x(0)=x_0.
	\end{equation}

	By virtue of the B\'enilan-Brezis theorem (see Brezis \cite[Proposition 3.8, p.82]{7}), we know that problem (\ref{eq5}) has a unique solution $x\in W^{1,1}((0,b),\RR^N)=AC^1(T,\RR^N)$. So, we can define the solution map $K:L^1(T,\RR^N)\rightarrow C(T,\RR^N)$, which to each $h\in L^1(T,\RR^N)$ assigns the unique solution $K(h)\in AC^1(T,\RR^N)\subseteq C(T,\RR^N)$ of (\ref{eq5}).

	\begin{claim}\label{claim3}
		The solution map $K:L^1(T,\RR^N)\rightarrow C(T,\RR^N)$ is completely continuous.
	\end{claim}

	Suppose that $h_n\xrightarrow{w}h$ in $L^1(T,\RR^N)$ and let $x_n=K(h_n)$ for all $n\in\NN$ and $x=K(h)$. From Baras \cite{2}, we know that
	\begin{equation*}
		\{x_n\}_{n\geq1}\subseteq C(T,\RR^N)\ \mbox{is relatively compact}.
	\end{equation*}

	So, up to a subsequence, we may assume that
	\begin{equation}\label{eq6}
		x_n\rightarrow x^*\ \mbox{in}\ C(T,\RR^N).
	\end{equation}

	Exploiting the monotonicity of $A(\cdot)$, we obtain
	\begin{equation*}
		\begin{array}{ll}
			& (x'_n(t) - x'(t), x_n(t)-x(t))_{\RR^N} + (h_n(t) - h(t), x_n(t)-x(t))_{\RR^N}\leq0\ \mbox{for almost all}\ t\in T,\\
			\Rightarrow & \frac{1}{2}\frac{d}{dt}|x_n(t)-x(t)|^2\leq(h_n(t)-h(t),x(t)-x_n(t))_{\RR^N}\ \mbox{for almost all}\ t\in T,\\
			\Rightarrow & |x_n(t)-x(t)|^2\leq 2\int^t_0(h_n(s)-h(s),x(s)-x_n(s))_{\RR^N}ds\ \mbox{for all}\ t\in T,\ n\in\NN, \\
			\Rightarrow & |x^*(t)-x(t)|^2\leq0\ \mbox{for all}\ t\in T\ \mbox{(see (\ref{eq6}) and recall that $h_n\xrightarrow{w}h$ in $L^1(T,\RR^N)$)}, \\
			\Rightarrow & x=x^*.
		\end{array}
	\end{equation*}

	We deduce that for the original sequence we have
	\begin{equation*}
		\begin{array}{ll}
			& x_n\rightarrow x\ \mbox{in}\ C(T,\RR^N), \\
			\Rightarrow & K:L^1(T,\RR^N) \rightarrow C(T,\RR^N)\ \mbox{is completely continuous}.
		\end{array}
	\end{equation*}

	This proves Claim \ref{claim3}.

\smallskip
	Let $N_F:C(T,\RR^N)\rightarrow2^{L^1(T,\RR^N)}$ be the multivalued Nemitsky operator corresponding to the multifunction $F(t,x)$, that is,
	\begin{equation*}
		N_F(x) = S^1_{F(\cdot,x(\cdot)))}\ \mbox{for all}\ x\in C(T,\RR^N).
	\end{equation*}

	The measurability of $t\rightarrow F(t,x(t))$ and hypotheses $H(f)(iii)$ and $H(U)(iii)$ imply that
	\begin{equation*}
		N_F(x)\in P_{wk}(L^1(T,\RR^N))\ \mbox{for all}\ x\in C(T,\RR^N).
	\end{equation*}

	On account of Theorems 7.26 and 7.27 of Hu \& Papageorgiou \cite[p. 237]{20}, the multifunction $x\rightarrow N_F(x)$ is $h$-continuous. In particular, it is then also lsc (see Proposition 2.66 of Hu \& Papageorgiou \cite[p.61]{20}). So, we can apply Theorem \ref{th2} and produce a continuous map $g:C(T,\RR^N)\rightarrow L^1(T,\RR^N)$ such that
	\begin{equation}\label{eq7}
		g(x)\in N_F(x)\ \mbox{for all}\ x\in C(T,\RR^N).
	\end{equation}

	Evidently, the map
	\begin{equation}\label{eq8}
		x\rightarrow (K\circ g)(x)\ \mbox{is continuous}.
	\end{equation}

	Hypotheses $H(f)(iii)$ and $H(U)(iii)$ and the complete continuity of $K(\cdot)$, imply that $(K\circ g)(\cdot)$ maps bounded sets in $C(T,\RR^N)$ to relatively compact sets in $C(T,\RR^N)$. Therefore
	\begin{equation}\label{eq9}
		x\rightarrow(K\circ g)(x)\ \mbox{is compact}.
	\end{equation}

	Consider the set
	\begin{equation*}
		C=\{x\in C(T,\RR^N): x=\lambda(K\circ g)(x),\ 0<\lambda<1\}.
	\end{equation*}

	\begin{claim}\label{claim4}
		The set $C\subseteq C(T,\RR^N)$ is bounded.
	\end{claim}

	Let $x\in C$. Then
	\begin{equation*}
		\begin{array}{ll}
			& \frac{1}{\lambda}x = K(g(x)),\\
			\Rightarrow & -x'(t)\in\lambda A(\lambda^{-1}x(t)) + \lambda g(x)(t)\ \mbox{for almost all}\ t\in T,\ x(0)=\lambda x_0.
		\end{array}
	\end{equation*}

	We take inner product with $x(t)$ and use hypothesis $H(A)$. Then
	\begin{equation*}
		\begin{array}{ll}
			& \frac{1}{2}\frac{d}{dt}|x(t)|^2 + \lambda(g(x)(t),x(t))_{\RR^N} \leq 0\ \mbox{for almost all}\ t\in T, \\
			\Rightarrow & \frac{1}{2}|x(t)|^2 \leq \frac{1}{2}|x_0|^2 + \int^t_0|g(x)(s)||x(s)|ds\ \mbox{(since $0<\lambda<1$)}.
		\end{array}
	\end{equation*}

	Invoking Lemma A.5 of Brezis \cite[p. 157]{7}, we obtain
	\begin{equation*}
		\begin{array}{ll}
			&\displaystyle |x(t)|  \leq |x_0| + \int^t_0|g(x)(s)|ds \leq\\
			& \displaystyle |x_0| + \int^t_0\hat{a}(s)(1+|x(s)|)ds\ \mbox{for all}\ t\in T,\ \mbox{with}\ \hat{a}\in L^1(T) \\
			& \mbox{(see (\ref{eq7}) and hypotheses $H(F)(iii), H(U)(iii)$)}, \\
			\Rightarrow & |x(t)| \displaystyle \leq c_1\ \mbox{for all}\ t\in T,  x\in C\ \mbox{and some}\ C_1>0\ \mbox{(by Gronwall's inequality)}.
		\end{array}
	\end{equation*}

	Therefore $C\subseteq C(T,\RR^N)$ is bounded. This proves Claim \ref{claim4}.

\smallskip
	Then (\ref{eq9}) and Claim \ref{claim4} permit the use of Theorem \ref{th1} (the Leray-Schauder alternative theorem). So, we can find $\hat{x}\in C(T,\RR^N)$ such that
	\begin{equation*}
		\begin{array}{ll}
			&\hat{x}=(K\circ g)(\hat{x}), \\
			\Rightarrow & -\hat{x}'(t)\in A(\hat{x}(t)) + g(\hat{x})(t)\in A(\hat{x}(t)) + F(t,\hat{x}(t))\ \mbox{for almost all}\ t\in T, \\
			& \hat{x}(0)=x_0.
		\end{array}
	\end{equation*}

	Consider the multifunction
	\begin{equation*}
		\begin{array}{ll}
			& E(t) = \{u\in U(t,\hat{x}(t)):g(\hat{x})(t)=f(t,\hat{x}(t))u\},\\
			\Rightarrow & {\rm Gr}\,E\in \mathcal{L}_T\otimes B(\RR^m),
		\end{array}
	\end{equation*}
	with $\mathcal{L}_T$ being the Lebesgue $\sigma$-field of $T$ and $B(\RR^m)$ the Borel $\sigma$-field of $\RR^m$. Applying the Yankov-von Neumann-Aumann selection theorem (see Hu \& Papageorgiou \cite[Theorem 2.14, p.158]{20}), we obtain a measurable map $\hat{u}:T\rightarrow\RR^m$ such that
	\begin{equation*}
		\begin{array}{ll}
			& \hat{u}(t)\in E(t)\ \mbox{for almost all}\ t\in T, \\
			\Rightarrow & g(\hat{x})(t) = f(t,\hat{x}(t))\hat{u}(t)\ \mbox{for almost all}\ t\in T, \\
			\Rightarrow & (\hat{x},\hat{u})\in P.
		\end{array}
	\end{equation*}

	Moreover, as in the proof of Claim \ref{claim4}, we can show that there exists $\hat{c}>0$ such that
	\begin{equation*}
		||x||_{C(T,\RR^N)}\leq\hat{c}\ \mbox{for all}\ x\in \mathscr{S}_C.
	\end{equation*}
The proof  of Proposition 3.1 is now complete.
\end{proof}

In what follows, we denote by $L^1(T,\RR^m)_w$  the Lebesgue space $L^1(T,\RR^N)$ equipped with the weak topology and by $L^1_w(T,\RR^N)$ the same space furnished with the weak norm.

\begin{prop}\label{prop5}
	If hypotheses $H(A),H(f),H(U),H_0$ hold then $P_c\subseteq C(T,\RR^N)\times L^1(T,\RR^m)_w$ is sequentially compact.
\end{prop}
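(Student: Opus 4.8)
The goal is to show that $P_c \subseteq C(T,\RR^N)\times L^1(T,\RR^m)_w$ is sequentially compact. The plan is to start with an arbitrary sequence $\{(x_n,u_n)\}_{n\geq 1}\subseteq P_c$ and extract a subsequence converging to some $(x,u)\in P_c$, where convergence means $x_n\to x$ in $C(T,\RR^N)$ and $u_n\xrightarrow{w}u$ in $L^1(T,\RR^m)$. First I would handle the controls: by $H(U)(iii)$ we have $|u_n(t)|\leq a_0(t)$ a.e.\ with $a_0\in L^\infty(T)$, so $\{u_n\}_{n\geq 1}$ is bounded in $L^\infty(T,\RR^m)$ and in particular uniformly integrable; hence by the Dunford--Pettis theorem we may pass to a subsequence with $u_n\xrightarrow{w}u$ in $L^1(T,\RR^m)$ (indeed weakly in every $L^p$, $1\le p<\infty$). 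Next I would show the trajectories $\{x_n\}_{n\geq 1}$ lie in a relatively compact subset of $C(T,\RR^N)$: write $h_n(t)=f(t,x_n(t))u_n(t)$, so that $x_n=K(h_n)$ where $K$ is the solution operator of \eqref{eq5} from Claim \ref{claim3}. By Proposition \ref{prop4} the $x_n$ are uniformly bounded, say $\|x_n\|_{C(T,\RR^N)}\le \hat c$; then $H(f)(iii)$ gives $|h_n(t)|\le a(t)(1+\hat c)a_0(t)$, a fixed $L^1$ bound (in fact $L^\infty$), so $\{h_n\}$ is bounded and uniformly integrable in $L^1(T,\RR^N)$. Invoking the compactness result of Baras \cite{2} (as in Claim \ref{claim3}), the set $\{x_n\}_{n\geq 1}=\{K(h_n)\}_{n\geq 1}$ is relatively compact in $C(T,\RR^N)$, so after a further subsequence $x_n\to x$ in $C(T,\RR^N)$.

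It then remains to identify the limit $(x,u)$ as an element of $P_c$, i.e.\ to verify the state equation and the control constraint for the convexified system \eqref{eq2}. For the constraint, I would argue that $u_n(t)\in \operatorname{conv}U(t,x_n(t))$ a.e., and using $H(U)(ii)$ (the $h$-Lipschitz property in $x$) together with $x_n\to x$ uniformly, the sets $\operatorname{conv}U(t,x_n(t))$ converge in Hausdorff distance to $\operatorname{conv}U(t,x(t))$ uniformly in $t$; combined with $u_n\xrightarrow{w}u$ in $L^1$, a Mazur-type argument (convex combinations of the $u_n$ converge strongly, hence a.e.\ along a subsequence) yields $u(t)\in \operatorname{conv}U(t,x(t))$ a.e. Concretely, for each fixed $x$ the set $S^1_{\operatorname{conv}U(\cdot,x(\cdot))}$ is convex and closed in $L^1$, hence weakly closed; one transfers the weak limit onto it using the uniform Hausdorff convergence of the constraint multifunctions and the boundedness by $a_0$. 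For the state equation, let $h(t)=f(t,x(t))u(t)$; since $x_n\to x$ in $C(T,\RR^N)$ and $f(t,\cdot)$ is locally Lipschitz by $H(f)(ii)$ while $\|u_n\|_\infty$ is bounded, one shows $f(\cdot,x_n(\cdot))u_n(\cdot)\xrightarrow{w}f(\cdot,x(\cdot))u(\cdot)$ in $L^1(T,\RR^N)$: write $f(t,x_n(t))u_n(t)-f(t,x(t))u_n(t)\to 0$ strongly in $L^1$ by the Lipschitz estimate and uniform convergence, while $f(t,x(t))u_n(t)\xrightarrow{w}f(t,x(t))u(t)$ since $u_n\xrightarrow{w}u$ and $f(\cdot,x(\cdot))\in L^\infty$ acts as a bounded multiplier. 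Hence $h_n\xrightarrow{w}h$ in $L^1(T,\RR^N)$, and the complete continuity of $K$ (Claim \ref{claim3}) gives $x_n=K(h_n)\to K(h)$ in $C(T,\RR^N)$; by uniqueness of limits $x=K(h)$, i.e.\ $-x'(t)\in A(x(t))+f(t,x(t))u(t)$ a.e., $x(0)=x_0$. Thus $(x,u)\in P_c$.

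\textbf{Main obstacle.} The delicate point is the passage to the limit in the state-dependent constraint $u_n(t)\in\operatorname{conv}U(t,x_n(t))$, since one only has weak $L^1$ convergence of $u_n$ against a moving target $\operatorname{conv}U(t,x_n(t))$. The clean way around this is to exploit that the constraint sets are uniformly bounded (by $a_0\in L^\infty$) and vary $h$-Lipschitz-continuously in the state (hypothesis $H(U)(ii)$), so that $h\big(\operatorname{conv}U(t,x_n(t)),\operatorname{conv}U(t,x(t))\big)\le k(t)|x_n(t)-x(t)|\le \|k\|_\infty\,\|x_n-x\|_{C(T,\RR^N)}\to 0$; one then writes $u_n = v_n + w_n$ where $v_n(t)\in\operatorname{conv}U(t,x(t))$ is a measurable selection of nearest points and $\|w_n\|_\infty\to 0$, so that $v_n\xrightarrow{w}u$ in $L^1$ with $v_n\in S^1_{\operatorname{conv}U(\cdot,x(\cdot))}$, a convex closed (hence weakly closed) subset of $L^1(T,\RR^m)$, forcing $u\in S^1_{\operatorname{conv}U(\cdot,x(\cdot))}$. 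A secondary technical point is justifying the weak $L^1$ convergence $f(\cdot,x_n(\cdot))u_n(\cdot)\xrightarrow{w}f(\cdot,x(\cdot))u(\cdot)$, but this follows from the split above together with $H(f)(ii)$--$(iii)$ and the uniform boundedness of $\{u_n\}$, exactly as in the manipulations already used in the proof of Proposition \ref{prop4}.
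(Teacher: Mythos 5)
Your proof is correct and follows essentially the same route as the paper: Dunford--Pettis for the controls, compactness/complete continuity of the solution map $K$ for the states, identification of the weak $L^1$ limit of $f(\cdot,x_n(\cdot))u_n(\cdot)$ by splitting off the strongly convergent part, and weak closedness of $S^1_{\operatorname{conv} U(\cdot,x(\cdot))}$ for the constraint. If anything you supply more detail than the paper on the last step (the nearest-point decomposition $u_n=v_n+w_n$), where the paper merely cites $H(U)(ii)$; the one cosmetic slip is calling $f(\cdot,x(\cdot))$ an $L^\infty$ multiplier when $H(f)(iii)$ only gives an $L^1$ bound, but since $\{u_n\}$ is $L^\infty$-bounded the duality pairing argument goes through anyway (the paper's own estimate (14) has the same issue).
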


\begin{proof}
	Let $\{(x_n,u_n)\}_{n\geq1}\subseteq P_c$. Hypothesis $H(U)(iii)$ implies that by passing to a subsequence if necessary, we may assume that
	\begin{equation}\label{eq10}
		u_n\xrightarrow{w} u\ \mbox{in}\ L^1(T,\RR^m).
	\end{equation}
	
	Recall that $||x_n||_{C(T,\RR^N)}\leq \hat{c}$ for all $n\in\NN$. Hence by hypothesis $H(f)(iii)$ and the Dunford-Pettis theorem, we see that
	\begin{equation*}
		\{g_n(\cdot)=f(\cdot,x_n(\cdot))u_n(\cdot)\}_{n\geq1}\subseteq L^1(T,\RR^n)
	\end{equation*}
	is relatively $w$-compact. So, we may assume that for some $g\in L^1(T,\RR^N)$ we have
	\begin{equation}\label{eq11}
		g_n\xrightarrow{w} g\ \mbox{in}\ L^1(T,\RR^N).
	\end{equation}
	
	By  Claim \ref{claim4} in the proof of Proposition \ref{prop4}, we have
	\begin{equation}\label{eq12}
		x_n=K(g_n)\rightarrow K(g)=x\ \mbox{in}\ C(T,\RR^N).
	\end{equation}
	
	For every $h\in L^\infty(T,\RR^N)=L^1(T,\RR^N)^*$, we have
	\begin{equation}\label{eq13}
		\int^b_0(g_n,h)_{\RR^N}dt = \int^b_0(u_n,f(t,x_n)^*h)_{\RR^m}dt.
	\end{equation}
	
	Note that
	\begin{eqnarray}\label{eq14}
		&& ||f(t,x_n(t))^* - f(t,x(t))^*||_\mathcal{L} \nonumber \\
		&= & ||f(t,x_n(t)) - f(t,x(t))||_\mathcal{L} \leq l_{\hat{c}}(t)||x_n-x||_{C(T,\RR^N)}\ \mbox{for almost all}\ t\in T, \nonumber \\
		\Rightarrow && ||f(\cdot,x_n(\cdot))^*h(\cdot) - f(\cdot,x(\cdot))^*h(\cdot)||_\infty\rightarrow0\ \mbox{as}\ n\rightarrow\infty\ \mbox{(see (\ref{eq12}))}.
	\end{eqnarray}
	
	If in (\ref{eq13}) we pass to the limit as $n\rightarrow\infty$ and use (\ref{eq10}), (\ref{eq11}), (\ref{eq14}), then
	\begin{eqnarray}\label{eq15}
		&& \int^b_0(g,h)_{\RR^N}dt = \int^b_0(f(t,x)u,h)_{}\RR^Ndt\ \mbox{for all}\ h\in L^\infty(T,\RR^N), \nonumber \\
		&\Rightarrow & g(\cdot)=f(\cdot,x(\cdot))u(\cdot).
	\end{eqnarray}
	
	From Proposition 3.6 of Brezis \cite[p. 70]{7}, we have
	\begin{equation*}
		\begin{array}{ll}
			& \frac{1}{2}|x_n(t)-y|^2\leq \frac{1}{2}|x_n(s)-y|^2 + \int^t_s(g_n(\tau)-y^*,x_n(\tau)-y)_{\RR^N}dt \\
			& \mbox{for all}\ 0\leq s\leq t\leq b,\ (y,y^*)\in {\rm Gr}\,A,\ n\in\NN, \\
			\Rightarrow & \frac{1}{2}|x(t)-y|^2\leq\frac{1}{2}|x(s)-y|^2 + \int^t_s(g(\tau)-y^*, x(\tau)-y)_{\RR^N}dt\\
			& \mbox{for all}\ 0\leq s\leq t\leq b,\ (y,y^*)\in {\rm Gr}\,A.
		\end{array}
	\end{equation*}
	
	Invoking once again Proposition 3.6 of Brezis \cite[p. 70]{7}, we have
	\begin{eqnarray*}
		-x'(t)\in A(x(t)) + f(t,x(t))u(t)\ \mbox{for almost all}\ t\in T, \\
		x(0)=x_0,\ u(t)\in {\rm conv}\, U(t,x(t))\ \mbox{for almost all}\ t\in T,
	\end{eqnarray*}
	(see (\ref{eq15}) and hypothesis $H(U)(ii)$). Therefore
	\begin{equation*}
		(x,u)\in P_c
	\end{equation*}
	and so we can conclude that $P_c$ is sequentially compact in $C(T,\RR^N)\times L^1(T,\RR^m)_w$.
	The proof  of Proposition 3.2 is now complete.
\end{proof}

Let $V=\RR^m\times\RR$ and consider the multifunction $\Gamma:T\times\RR^N\rightarrow P_f(V)$ defined by
\begin{eqnarray}\label{eq16}
	\Gamma(t,x) = \{(u,\eta)\in\RR^m\times\RR:u\in U(t,x),\ L(t,x,u)\leq \eta\leq a_{\hat{c}}(t)\}.\\
	\mbox{with}\ \hat{c}>0\ \mbox{from Proposition \ref{prop4} and}\ a_{\hat{c}}(\cdot)\ \mbox{from}\ H(L)(iii). \nonumber
\end{eqnarray}

\begin{prop}\label{prop6}
	If hypotheses $H(U),H(L)$ hold, then
	\begin{itemize}
		\item [(a)] the multifunction $(t,x)\rightarrow\Gamma(t,x)$ is graph measurable;
		\item [(b)] for almost all $t\in T$, the map $ x\rightarrow\Gamma(t,x)$ is usc.
	\end{itemize}
\end{prop}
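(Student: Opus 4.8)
The plan is to establish the two assertions in turn; both are essentially bookkeeping with the hypotheses, so I expect no serious obstacle.

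\medskip
\noindent\textbf{Proof of (a).} I would write the graph of $\Gamma$ as
\[
	\mathrm{Gr}\,\Gamma=\bigl\{(t,x,u,\eta)\in T\times\RR^N\times\RR^m\times\RR:\ (t,x,u)\in\mathrm{Gr}\,U,\ \ L(t,x,u)-\eta\le0,\ \ \eta-a_{\hat c}(t)\le0\bigr\}
\]
and exhibit it as an intersection of three $\mathcal L_T\otimes B(\RR^N\times V)$-measurable sets. Since $U(\cdot,\cdot)$ is measurable in $t$ by $H(U)(i)$ and, being $h$-Lipschitz by $H(U)(ii)$, is $h$-continuous in $x$, the same joint-measurability result used in the proof of Proposition~\ref{prop4} (Hu \& Papageorgiou \cite[Proposition 7.9, p.229]{20}) shows that $U$ is jointly measurable, so $\mathrm{Gr}\,U\in\mathcal L_T\otimes B(\RR^N\times\RR^m)$ and the first set (with the dummy coordinate $\eta$) lies in $\mathcal L_T\otimes B(\RR^N\times V)$. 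Next, $H(L)(i)$ and $H(L)(ii)$ make $L$ a Carath\'eodory function on $T\times(\RR^N\times\RR^m)$, hence $\mathcal L_T\otimes B(\RR^N\times\RR^m)$-measurable, so $(t,x,u,\eta)\mapsto L(t,x,u)-\eta$ and $(t,x,u,\eta)\mapsto\eta-a_{\hat c}(t)$ are measurable and the remaining two sets are measurable as well. Intersecting gives $\mathrm{Gr}\,\Gamma\in\mathcal L_T\otimes B(\RR^N\times V)$, which is (a). (That each value $\Gamma(t,x)$ is closed, in fact compact, follows from $U(t,x)\in P_k(\RR^m)$, the continuity of $u\mapsto L(t,x,u)$, and the bounds in $H(U)(iii)$ and $H(L)(iii)$.)

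\medskip
\noindent\textbf{Proof of (b).} Fix $t\in T$ outside the null set on which the $L^1$/$L^\infty$ functions occurring in $H(U)$, $H(L)$ equal $+\infty$ or on which the estimates of $H(U)(ii)$--$(iii)$ and of $H(L)(ii)$--$(iii)$ (the latter two for every integer radius) fail. I would deduce upper semicontinuity from the standard criterion that a closed-valued multifunction with closed graph which sends bounded sets to relatively compact sets is usc (Hu \& Papageorgiou \cite{20}). For the \emph{closed graph}, take $x_n\to x$ in $\RR^N$ and $(u_n,\eta_n)\in\Gamma(t,x_n)$ with $(u_n,\eta_n)\to(u,\eta)$ in $V$; from $u_n\in U(t,x_n)$ and $H(U)(ii)$,
\[
	d(u,U(t,x))\le|u-u_n|+d(u_n,U(t,x))\le|u-u_n|+h(U(t,x_n),U(t,x))\le|u-u_n|+k(t)|x_n-x|\longrightarrow0,
\]
so $u\in U(t,x)$ since $U(t,x)$ is closed; picking an integer $r$ with $|x_n|,|u_n|,|x|,|u|\le r$ for all $n$, $H(L)(ii)$ gives $|L(t,x_n,u_n)-L(t,x,u)|\le\vartheta_r(t)(|x_n-x|+|u_n-u|)\to0$, so passing to the limit in $L(t,x_n,u_n)\le\eta_n\le a_{\hat c}(t)$ yields $L(t,x,u)\le\eta\le a_{\hat c}(t)$, i.e. $(u,\eta)\in\Gamma(t,x)$. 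For the \emph{compactness} property, if $|x|\le r$ then $U(t,x)\subseteq\{u:|u|\le a_0(t)\}$ by $H(U)(iii)$, so every $u\in U(t,x)$ has $|u|\le a_0(t)$, and taking an integer $\rho\ge\max(r,\|a_0\|_\infty)$, $H(L)(iii)$ gives $L(t,x,u)\ge-a_\rho(t)$; hence
\[
	\Gamma\bigl(t,\{x:|x|\le r\}\bigr)\subseteq\{u:|u|\le a_0(t)\}\times[-a_\rho(t),\,a_{\hat c}(t)],
\]
a compact subset of $V$. Closed graph together with this local compactness gives that $x\mapsto\Gamma(t,x)$ is usc, which is (b).

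\medskip
The argument presents no genuine difficulty; the only points needing a little care are assembling a single $t$-null set outside which \emph{all} the Lipschitz and boundedness estimates of $H(U)$ and $H(L)$ hold simultaneously for every integer radius, and invoking the correct form of the ``closed graph $+$ locally compact $\Rightarrow$ usc'' implication, since the target space $V=\RR^m\times\RR$ is not itself compact.
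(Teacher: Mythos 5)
Your proposal is correct and follows essentially the same route as the paper: part (a) is the identical decomposition of $\mathrm{Gr}\,\Gamma$ into the graph of $U$, the epigraph of $L$, and the bound set, and part (b) is the same sequential closed-graph verification, followed by the ``closed graph plus local compactness implies usc'' criterion (which the paper invokes via Hu \& Papageorgiou, Proposition 2.23). Your explicit check that $\Gamma(t,\cdot)$ maps bounded sets into a fixed compact subset of $V$ is a detail the paper leaves implicit in that citation, but it is the same argument.
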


\begin{proof}
	$(a)$ Let $\overline{B}(t)=\{(x,u,\eta)\in\RR^N\times\RR^m\times\RR:|u|\leq a_0(t),|\eta|\leq a_{\hat{a}}(t)\}$. Then
			\begin{equation*}
				{\rm Gr}\,\Gamma = ({\rm Gr}\, U\times\RR)\cap {\rm epi}\,L\cap \overline{B}.
			\end{equation*}
			Recall that ${\rm epi}\,L=\{(t,x,u,\eta)\in T\times\RR^N\times\RR^m\times\RR:L(t,x,u)\leq\eta\}$. Hypotheses $H(L)(i),\,(ii)$ imply that $(t,x,u)\rightarrow L(t,x,u)$ is measurable. So, it follows that ${\rm epi}\, L\in\mathcal{L}_T\otimes B(\RR^N)\otimes B(\RR^m)\otimes B(\RR)$. Similarly, hypotheses $H(U)(i)\,(ii)$ imply that $(t,x)\rightarrow U(t,x)$ is graph measurable. Therefore
	\begin{equation*}
		{\rm Gr}\,U\times\RR\in\mathcal{L}_T\otimes B(\RR^N)\otimes B(\RR^m)\otimes B(\RR).
	\end{equation*}

	Since $\overline{B}(\cdot)$ is clearly graph measurable, we conclude that
	\begin{equation*}
		{\rm Gr}\,\Gamma\in\mathcal{L}_T\otimes B(\RR^N)\otimes B(\RR^m)\otimes B(\RR).
	\end{equation*}
			
	$(b)$ Suppose that
	$$x_n\rightarrow x\ \mbox{in}\ \RR^N,\ u_n\rightarrow u\ \mbox{in}\ \RR^m,\ \eta_n\rightarrow\eta\ \mbox{in}\ \RR,\ (u_n,\eta_n)\in\Gamma(t,x_n)\ \mbox{for all}\ n\in\NN.$$
			
	We have
	\begin{eqnarray*}
			&&u_n\in U(t,x_n),L(t,x_n,u_n)\leq\eta_n\leq a_{\hat{c}}(t)\\
			&\Rightarrow& u\in U(t,x),\ L(t,x,u)\leq\eta\leq a_{\hat{c}}(t)\\
			&&(\mbox{see hypotheses}\ H(U)(ii),H(L)(ii)),\\
			&\Rightarrow&(u,\eta)\in {\rm Gr}\,\Gamma(t,\cdot),\\
			&\Rightarrow&x\rightarrow\Gamma(t,x)\ \mbox{is usc (see Hu \& Papageorgiou \cite[Proposition 2.23, p.43]{20})}.
	\end{eqnarray*}
The proof  of Proposition 3.3 is now complete.
\end{proof}
	
	We introduce the integrand $\hat{L}:T\times\RR^N\times\RR^m\rightarrow\bar{\RR}=\RR\cup\{+\infty\}$ defined by
	\begin{equation}\label{eq17}
		\hat{L}(t,x,u)=\left\{\begin{array}{ll}
			L(t,x,u)&\mbox{if}\ u\in U(t,x)\\
			+\infty&\mbox{otherwise}.
		\end{array}\right.
	\end{equation}
	
	By $\hat{L}^{**}(t,x,u)$ we denote the second convex conjugate of the function $u\rightarrow\hat{L}(t,x,u)$ (see, for example, Gasinski \& Papageorgiou \cite[p. 512]{17}).
	
	Then the relaxed optimal control problem for (\ref{eq1}) is:
	\begin{equation}\label{eq18}
		\left\{\begin{array}{ll}
			J_r(x,u)=\int^b_0\hat{L}^{**}(t,x(t),u(t))dt\rightarrow \inf=m_r,\\
			\mbox{subject to:}\ (x,u)\in P_c.
		\end{array}\right\}
	\end{equation}
	
	In what follows, given a function $\varphi:\RR^m\rightarrow\bar{\RR}=\RR\cup\{+\infty\}$, we set
	$${\rm dom}\,\varphi=\{u\in\RR^m:\varphi(u)<+\infty\}$$
	(the effective domain of $\varphi$).
\begin{prop}\label{prop7}
	If hypotheses $H(U),H(L)$ hold, then
	\begin{itemize}
		\item[(a)] ${\rm dom}\,\hat{L}^{**}(t,x,\cdot)={\rm conv}\, U(t,x)$ for all $(t,x)\in T\times\RR^N$;
		\item[(b)] $\hat{L}^{**}(t,x,u)=\min\{\eta\in\RR:(u,\eta)\in\overline{\rm conv}\,\Gamma(t,x)\}$ for almost all $t\in T$
		 and all $(x,u)\in\RR^N\times\RR^m$.
	\end{itemize}
\end{prop}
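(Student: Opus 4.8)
The plan is to deduce both assertions from the classical epigraphical description of the second conjugate: for a proper function $g:\RR^m\to\RR\cup\{+\infty\}$ admitting an affine minorant, $g^{**}$ is the closed convex hull of $g$, that is, the largest lower semicontinuous convex function $\le g$, and ${\rm epi}\,g^{**}=\overline{{\rm conv}}\,({\rm epi}\,g)$ (see Gasinski \& Papageorgiou \cite{17}). First I would record the elementary facts needed. For almost all $t\in T$, hypothesis $H(L)(ii)$ makes $(x,u)\mapsto L(t,x,u)$ locally Lipschitz, so $u\mapsto L(t,x,u)$ is continuous on $\RR^m$; since $U(t,x)\in P_k(\RR^m)$, the function $u\mapsto\hat L(t,x,u)$ then has effective domain exactly $U(t,x)$, is bounded below there (a continuous function on a compact set) and is lower semicontinuous (its epigraph $\{(u,\eta):u\in U(t,x),\ L(t,x,u)\le\eta\}$ is closed). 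Hence $\hat L^{**}(t,x,\cdot)$ is the closed convex hull of $\hat L(t,x,\cdot)$ and is proper. Moreover ${\rm conv}\,U(t,x)$ is compact; and, enlarging the constant $\hat c$ of Proposition \ref{prop4} if necessary so that $\hat c\ge||a_0||_\infty$, hypothesis $H(L)(iii)$ gives $L(t,x,u)\le a_{\hat c}(t)$ whenever $|x|\le\hat c$ and $u\in U(t,x)$. Thus for such $x$ the set $\Gamma(t,x)$ is nonempty; it is closed and bounded, hence compact, and then ${\rm conv}\,\Gamma(t,x)$ is compact as well (Carath\'eodory's theorem), so $\overline{{\rm conv}}\,\Gamma(t,x)={\rm conv}\,\Gamma(t,x)$.

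For part (a): since $\hat L^{**}(t,x,\cdot)\le\hat L(t,x,\cdot)$, we have ${\rm dom}\,\hat L^{**}(t,x,\cdot)\supseteq{\rm dom}\,\hat L(t,x,\cdot)=U(t,x)$, and, $\hat L^{**}(t,x,\cdot)$ being convex, its effective domain is convex and so contains ${\rm conv}\,U(t,x)$. For the reverse inclusion let $u_0\notin{\rm conv}\,U(t,x)$. Since the latter set is compact and convex, strict separation yields $\xi\in\RR^m$ with $\delta:=(\xi,u_0)_{\RR^m}-\sup\{(\xi,v)_{\RR^m}:v\in U(t,x)\}>0$. With $c_0:=\inf\{L(t,x,v):v\in U(t,x)\}\in\RR$, each affine (hence lsc convex) function $u\mapsto c_0+\alpha\big[(\xi,u)_{\RR^m}-\sup_{v\in U(t,x)}(\xi,v)_{\RR^m}\big]$, $\alpha>0$, lies $\le\hat L(t,x,\cdot)$ and therefore $\le\hat L^{**}(t,x,\cdot)$; evaluating at $u_0$ and letting $\alpha\to+\infty$ gives $\hat L^{**}(t,x,u_0)=+\infty$. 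Hence ${\rm dom}\,\hat L^{**}(t,x,\cdot)={\rm conv}\,U(t,x)$.

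For part (b): fix $t$ in the full-measure set above and $x$ with $|x|\le\hat c$ (the range relevant to \eqref{eq18}, whose states satisfy $||x||_{C(T,\RR^N)}\le\hat c$), and set $m(u)=\min\{\eta\in\RR:(u,\eta)\in\overline{{\rm conv}}\,\Gamma(t,x)\}$ when the vertical section is nonempty and $m(u)=+\infty$ otherwise; the minimum is attained because $\overline{{\rm conv}}\,\Gamma(t,x)$ is compact. One checks that ${\rm epi}\,m=\overline{{\rm conv}}\,\Gamma(t,x)+\big(\{0\}\times[0,\infty)\big)$, which is closed and convex (a Minkowski sum of a compact convex set and a closed convex cone), so $m$ is lsc and convex; it is proper since $\Gamma(t,x)\neq\emptyset$ and $\Gamma(t,x)$ is bounded below in the $\eta$-variable. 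Also $m\le\hat L(t,x,\cdot)$: for $u\in U(t,x)$ we have $L(t,x,u)\le a_{\hat c}(t)$, so $(u,L(t,x,u))\in\Gamma(t,x)\subseteq\overline{{\rm conv}}\,\Gamma(t,x)$ and $m(u)\le L(t,x,u)$, while for $u\notin U(t,x)$ the inequality is vacuous. Finally $m$ is the largest lsc convex minorant of $\hat L(t,x,\cdot)$: if $g$ is lsc convex with $g\le\hat L(t,x,\cdot)$, then ${\rm epi}\,g$ is closed, convex and contains ${\rm epi}\,\hat L(t,x,\cdot)\supseteq\Gamma(t,x)$, hence contains $\overline{{\rm conv}}\,\Gamma(t,x)$ and therefore ${\rm epi}\,m$, i.e. $g\le m$. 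By the description of the biconjugate recalled above, $m=\hat L^{**}(t,x,\cdot)$, which is precisely the claimed formula; this is consistent with (a), since ${\rm proj}_{\RR^m}\overline{{\rm conv}}\,\Gamma(t,x)={\rm conv}\,\big({\rm proj}_{\RR^m}\Gamma(t,x)\big)={\rm conv}\,U(t,x)$.

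The load-bearing point, I expect, is the compatibility of the truncation in the definition \eqref{eq16} of $\Gamma$ with the convexification: one needs that cutting the epigraph of $\hat L(t,x,\cdot)$ at the level $a_{\hat c}(t)$ costs nothing after passing to $\overline{{\rm conv}}$. This is exactly what $H(L)(iii)$ together with the a priori bound $\hat c$ from Proposition \ref{prop4} provides, since it forces $\sup_{u\in U(t,x)}L(t,x,u)\le a_{\hat c}(t)$ along the trajectories in $\mathscr{S}_c$. The remaining ingredients — strict separation in $\RR^m$, the epigraphical calculus, and the compactness bookkeeping that makes the infimum in (b) an actual minimum — are routine.
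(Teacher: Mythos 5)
Your argument is correct and rests on the same key fact as the paper's proof, namely the Ekeland--Temam characterization $\mathrm{epi}\,\hat L^{**}(t,x,\cdot)=\overline{\mathrm{conv}}\,\mathrm{epi}\,\hat L(t,x,\cdot)$ (equivalently, that $\hat L^{**}(t,x,\cdot)$ is the largest lsc convex minorant), from which (a) follows by projection and (b) by reading off the lower boundary of the convexified epigraph. You merely supply details the paper leaves implicit --- the separation argument for the reverse inclusion in (a), the compactness that turns the paper's ``$\inf$'' into the stated ``$\min$'', and the observation that the truncation at level $a_{\hat c}(t)$ in the definition of $\Gamma$ is harmless only when $|x|\le\hat c$ (after enlarging $\hat c$ to dominate $\|a_0\|_\infty$), which is the range actually used in problem \eqref{eq18}.
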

\begin{proof}
	$(a)$ From Proposition 3.2 of Ekeland-Temam \cite[p. 16]{13}, we know that
	\begin{equation}\label{eq19}
		{\rm epi}\,\hat{L}^{**}(t,x,\cdot)=\overline{\rm conv}\, {\rm epi}\,\hat{L}(t,x,\cdot)\ \mbox{for almost all}\ t\in T
		\ \mbox{and all}\ x\in\RR^N.
	\end{equation}
	
	Note that ${\rm dom}\,\hat{L}(t,x,\cdot)=U(t,x)$ (see (\ref{eq17})). So, from (\ref{eq19}) we infer that
	$${\rm dom}\,\hat{L}^{**}(t,x,\cdot)={\rm conv}\, U(t,x)\ \mbox{for almost all}\ t\in T
	\ \mbox{and all}\ x\in\RR^N.$$
	
	$(b)$ Similarly, from (\ref{eq19}) and (\ref{eq16}), we see that
	\begin{eqnarray*}
		&&\hat{L}^{**}(t,x,u)=\inf\{\eta\in\RR:(u,\eta)\in {\rm conv}\,\Gamma(t,x)\}\\
		&&\mbox{for almost all}\ t\in T
		\ \mbox{and all}\ (x,u)\in\RR^N\times\RR^m.
	\end{eqnarray*}
The proof  of Proposition 3.4 is now complete.
\end{proof}

We will also need the following result about the integrable selectors of a graph measurable multifunction, which is actually of independent interest.
\begin{prop}\label{prop8}
	If $G:T\rightarrow P_k(\RR^d)\ (d\geq 1)$ is graph measurable and $S^1_G\neq\emptyset$ and is uniformly integrable, then $\overline{S^1_G}^{||\cdot||_w}=S^1_{{\rm conv}\, G}$.
\end{prop}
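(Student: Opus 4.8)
The plan is to prove the two inclusions $\overline{S^1_G}^{\|\cdot\|_w}\subseteq S^1_{\mathrm{conv}\,G}$ and $S^1_{\mathrm{conv}\,G}\subseteq\overline{S^1_G}^{\|\cdot\|_w}$ separately. For the first inclusion, let $f_n\in S^1_G$ with $f_n\xrightarrow{\|\cdot\|_w}f$. Since $S^1_G$ is uniformly integrable and $\RR^d$ is reflexive (finite-dimensional), Proposition \ref{prop3} applies with $p=1$, giving $f_n\xrightarrow{w}f$ in $L^1(T,\RR^d)$. Now $S^1_{\mathrm{conv}\,G}$ is nonempty, convex, and closed in $L^1(T,\RR^d)$ with the norm topology (closedness follows from the usual argument: a norm-convergent subsequence has a further a.e.-convergent subsequence, and $\mathrm{conv}\,G(t)=\overline{\mathrm{conv}}\,G(t)$ is closed since $G(t)$ is compact, so $\mathrm{conv}\,G(t)\in P_{kc}(\RR^d)$), hence it is weakly closed by Mazur's theorem. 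Since each $f_n\in S^1_G\subseteq S^1_{\mathrm{conv}\,G}$, the weak limit $f$ lies in $S^1_{\mathrm{conv}\,G}$. This gives the first inclusion.

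For the reverse inclusion, the key tool is the fact that the weak norm $\|\cdot\|_w$ on a uniformly integrable (hence weakly relatively compact by Dunford--Pettis) subset of $L^1(T,\RR^d)$ is weaker than the weak topology restricted to that set — more precisely, I will use that the $\|\cdot\|_w$-closure of a decomposable set contains its weak closure on uniformly integrable sets. Concretely, fix $f\in S^1_{\mathrm{conv}\,G}$; then for a.e.\ $t$, $f(t)\in\mathrm{conv}\,G(t)$, so by Carathéodory's theorem $f(t)=\sum_{i=0}^{d}\lambda_i(t)\,v_i(t)$ with $\lambda_i(t)\geq0$, $\sum\lambda_i(t)=1$, $v_i(t)\in G(t)$, and by the Yankov--von Neumann--Aumann selection theorem the $\lambda_i(\cdot)$ and $v_i(\cdot)$ can be chosen measurable (the $v_i$ are then in $S^1_G$, using $S^1_G\neq\emptyset$ and uniform integrability, noting $|v_i(t)|\leq|G(t)|$). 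One then approximates $f$ in the weak norm by elements of $S^1_G$ by a Lyapunov-type / partitioning argument: on a fine measurable partition of $T$, replace the convex combination by one of its extreme participants $v_i$ on each cell, choosing the cells via a nonatomic-partition (Lyapunov convexity) argument so that the integrals $\int_0^t$ of the difference stay small uniformly in $t$. This produces $g_\varepsilon\in S^1_G$ (by decomposability of $S^1_G$) with $\|g_\varepsilon-f\|_w<\varepsilon$.

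The main obstacle is the approximation step in the second inclusion: making precise the passage from pointwise convex combinations to genuine selections while controlling the weak norm $\sup_{0\le t\le b}\|\int_0^t(g_\varepsilon-f)\|$. The clean way to handle this is to invoke the known characterization that on $L^1(T,\RR^d)$ the weak-norm closure of a decomposable, uniformly integrable set equals its weak closure (this is essentially the content of the relaxation theorem in Hu--Papageorgiou \cite{20}, and is exactly the phenomenon Proposition \ref{prop3} is designed to exploit): since $S^1_G$ is decomposable and $S^1_{\mathrm{conv}\,G}$ is its weak closure (this is a standard fact about Aumann integrals of multifunctions — $\overline{S^1_G}^{w}=S^1_{\overline{\mathrm{conv}}\,G}$ when $S^1_G$ is uniformly integrable), the $\|\cdot\|_w$-closure of $S^1_G$, which is sandwiched between $S^1_G$ and its weak closure by the first inclusion argument applied in reverse, must coincide with $S^1_{\mathrm{conv}\,G}$. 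Thus the two inclusions close up and the proof is complete.
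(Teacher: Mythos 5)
Your first inclusion $\overline{S^1_G}^{\|\cdot\|_w}\subseteq S^1_{{\rm conv}\,G}$ is correct and is essentially the easy half of what the paper does: weak-norm convergence of a uniformly integrable sequence yields weak convergence (Proposition \ref{prop3}), and $S^1_{{\rm conv}\,G}$ is convex and strongly closed, hence weakly closed. The problem is the reverse inclusion, which is where all the content lies, and there your argument does not close. The ``sandwich'' reasoning at the end is circular: knowing that $S^1_G\subseteq\overline{S^1_G}^{\|\cdot\|_w}\subseteq\overline{S^1_G}^{w}=S^1_{{\rm conv}\,G}$ does not force the middle set to equal the right-hand one. Proposition \ref{prop3} only ever gives you that weak-norm limits are weak limits, i.e.\ it reproduces the inclusion you already have; what you need is the opposite assertion, that on the uniformly integrable set $S^1_{{\rm conv}\,G}$ the relative weak topology is no finer than the relative $\|\cdot\|_w$-topology, so that the weak density of $S^1_G$ in $S^1_{{\rm conv}\,G}$ (Hu--Papageorgiou \cite[Proposition 3.30]{20}) transfers to $\|\cdot\|_w$-density. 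That converse is a genuine theorem, not a formal consequence of anything you have written: it is Gutman's result \cite{19} that on sets with property $U$ of Bourgain \cite{5} (in particular on uniformly integrable, decomposable/convex sets of selections) the weak and weak-norm topologies coincide. This is precisely the ingredient the paper invokes, and it is the one your ``known characterization'' gestures at without identifying.

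Your alternative route --- Carath\'eodory decomposition of $f(t)\in{\rm conv}\,G(t)$ into measurable selections $v_i\in S^1_G$, followed by a Lyapunov-type partition of $T$ so that replacing the convex combination by a single $v_i$ on each cell keeps $\sup_{t}\bigl|\int_0^t(g_\varepsilon-f)\,ds\bigr|$ small --- is the classical proof of exactly that density statement and would, if carried out, give a self-contained argument independent of Gutman's theorem. But as written it is only a sketch, and the uniform-in-$t$ control of the primitive (which is what distinguishes $\|\cdot\|_w$-approximation from mere weak approximation) is precisely the step you defer. Either execute that construction in detail, or cite Gutman's theorem explicitly; as it stands, the second inclusion is asserted rather than proved.
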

\begin{proof}
	From Proposition 3.30 of Hu \& Papageorgiou \cite[p. 185]{20}, we have
	\begin{equation}\label{eq20}
		S^1_{{\rm conv}\, G}=\overline{S^1_{G}}^w\ \mbox{in}\ L^1(\Omega,\RR^N).
	\end{equation}
	
	On the other hand, Proposition 7.16 of Hu \& Papageorgiou \cite[p. 232]{20}, says that given any $\epsilon>0$, we can find $T_{\epsilon}\subseteq T$ closed such that
	$$|T\backslash T_{\epsilon}|_1<\epsilon\ \mbox{and}\ G|_{T_\epsilon}\ \mbox{is}\ \mbox{$h$-continuous}$$
	(here by $|\cdot|$, we denote the Lebesgue measurable on $\RR$). Then we have $G(T_\epsilon)\in P_k(\RR^d)$. Moreover, since $S^1_G$ is uniformly integrable, so is $\overline{\rm conv}\, S^1_G=S^1_{\rm conv\, G}$. Hence $S^1_{{\rm conv}\, G}$ has property $U$ of Bourgain \cite{5} and so applying the theorem of Gutman \cite{19} (see also Hu \& Papageorgiou \cite[Proposition 4.14, p.195]{20}), we infer that on $S^1_{{\rm conv}\, G}$ the $w$-topology and the $||\cdot||_w$-topology coincide. So, from (\ref{eq20}), we have
	$$S^1_{{\rm conv}\, G}=\overline{S^1_G}^{||\cdot||_w}.$$
The proof  of Proposition 3.5 is now complete.
\end{proof}

Using Proposition 3.5, we can obtain
 the following approximation result which is critical in establishing the admissibility of the relaxation (\ref{eq18}).
\begin{prop}\label{prop9}
	If hypotheses $H(A),H(f),H(U),H(L),H_0$ hold and $(\hat{x},\hat{u})\in P_c$, then we can find $\hat{u}_n\in S^1_{U(\cdot,\hat{x}(\cdot))},\ n\in\NN$, such that
	\begin{eqnarray*}
		&&\hat{u}_n\stackrel{||\cdot||_w}{\longrightarrow}\hat{u}\ \mbox{in}\ L^1(T,\RR^m)\\
		&&L(\cdot,\hat{x}_n(\cdot),\hat{u}_n(\cdot))\stackrel{||\cdot||_w}{\longrightarrow}\hat{L}^{**}(\cdot,\hat{x}(\cdot),\hat{u}(\cdot))\ \mbox{in}\ L^1(T).
	\end{eqnarray*}
\end{prop}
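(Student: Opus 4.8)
The plan is to realize the pair $(\hat{x},\hat{u})\in P_c$ as a point on the graph of the convexified orientor field and then "de-convexify" it using the weak-norm density result of Proposition 3.5, carefully keeping track of the cost integrand at the same time. Concretely, since $(\hat{x},\hat{u})\in P_c$ we have $\hat{u}(t)\in{\rm conv}\,U(t,\hat{x}(t))$ a.e., and by Proposition 3.4(a) this is exactly ${\rm dom}\,\hat{L}^{**}(t,\hat{x}(t),\cdot)$, while by Proposition 3.4(b)
$$\hat{L}^{**}(t,\hat{x}(t),\hat{u}(t))=\min\{\eta:(\hat{u}(t),\eta)\in\overline{\rm conv}\,\Gamma(t,\hat{x}(t))\}.$$
So the natural object to work with is the multifunction $t\mapsto G(t):=\Gamma(t,\hat{x}(t))\in P_f(\RR^m\times\RR)$; by Proposition 3.3(a) and the measurability of $\hat{x}(\cdot)$ it is graph measurable, and by hypotheses $H(U)(iii)$, $H(L)(iii)$ together with the uniform bounds $|u|\le a_0(t)$, $|\eta|\le a_{\hat{c}}(t)$ its values lie in $P_k(\RR^m\times\RR)$ and $S^1_G$ is uniformly integrable (in fact $L^\infty$-bounded). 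I would first check $S^1_G\neq\emptyset$: the measurable selection $\hat{u}(\cdot)$ of $t\mapsto{\rm conv}\,U(t,\hat{x}(t))$ together with $\eta(t):=\hat{L}^{**}(t,\hat{x}(t),\hat{u}(t))$ (measurable, since it is the value of the measurable convexified integrand along measurable arguments) gives a selector of ${\rm conv}\,G(t)=\overline{\rm conv}\,\Gamma(t,\hat{x}(t))$, so in particular $S^1_{{\rm conv}\,G}\neq\emptyset$, hence $S^1_G\neq\emptyset$ as well since $G$ has compact values and $S^1_{{\rm conv}\,G}=\overline{{\rm conv}}\,S^1_G$ is nonempty.

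Having set this up, the second step is to apply Proposition 3.5 to $G$: it yields
$$\overline{S^1_G}^{\;\|\cdot\|_w}=S^1_{{\rm conv}\,G}.$$
The pair $(\hat{u},\eta)=(\hat{u}(\cdot),\hat{L}^{**}(\cdot,\hat{x}(\cdot),\hat{u}(\cdot)))$ belongs to $S^1_{{\rm conv}\,G}$ by the previous paragraph, so there is a sequence $(w_n,\zeta_n)\in S^1_G$ with $(w_n,\zeta_n)\xrightarrow{\|\cdot\|_w}(\hat{u},\eta)$ in $L^1(T,\RR^m\times\RR)$; equivalently $w_n\xrightarrow{\|\cdot\|_w}\hat{u}$ in $L^1(T,\RR^m)$ and $\zeta_n\xrightarrow{\|\cdot\|_w}\hat{L}^{**}(\cdot,\hat{x}(\cdot),\hat{u}(\cdot))$ in $L^1(T)$. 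Membership in $S^1_G$ means precisely $w_n(t)\in U(t,\hat{x}(t))$ and $L(t,\hat{x}(t),w_n(t))\le\zeta_n(t)\le a_{\hat{c}}(t)$ a.e., so in particular $\hat{u}_n:=w_n\in S^1_{U(\cdot,\hat{x}(\cdot))}$ and the first convergence in the statement is obtained. (Here I read the statement's $\hat{x}_n$ as $\hat{x}$, since the approximating controls are all taken along the fixed trajectory $\hat{x}$; if the intended reading is that each $\hat{u}_n$ is fed back through the original dynamics to produce $\hat{x}_n\in\mathscr{S}$, one would additionally invoke Proposition 3.1 / Claim 3.4 to get $\hat{x}_n=K(f(\cdot,\hat{x}_n(\cdot))\hat{u}_n(\cdot))\to K(f(\cdot,\hat{x}(\cdot))\hat{u}(\cdot))=\hat{x}$ in $C(T,\RR^N)$, using $w_n\xrightarrow{\|\cdot\|_w}\hat{u}$, the uniform bound from $H(U)(iii)$, Proposition 2.3 to upgrade to weak $L^1$ convergence of $f(\cdot,\hat{x}_n(\cdot))\hat{u}_n(\cdot)$, and the complete continuity of $K$ — then the Lipschitz estimate $H(L)(ii)$ lets one replace $L(\cdot,\hat{x}(\cdot),w_n(\cdot))$ by $L(\cdot,\hat{x}_n(\cdot),w_n(\cdot))$ at the cost of a term bounded by $\|\vartheta_{\hat c}\|_1\|\hat{x}_n-\hat{x}\|_\infty\to0$ in the ordinary $L^1$, hence also in the weaker $\|\cdot\|_w$.)

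The remaining and genuinely delicate step is to turn the cost convergence $\zeta_n\xrightarrow{\|\cdot\|_w}\hat{L}^{**}(\cdot,\hat{x}(\cdot),\hat{u}(\cdot))$ into the desired $L(\cdot,\hat{x}(\cdot),\hat{u}_n(\cdot))\xrightarrow{\|\cdot\|_w}\hat{L}^{**}(\cdot,\hat{x}(\cdot),\hat{u}(\cdot))$, since a priori we only control $L(t,\hat{x}(t),w_n(t))$ from above by $\zeta_n(t)$, not its exact value. The idea is that we are free to shrink each $\zeta_n$: replace $\zeta_n$ by $\tilde{\zeta}_n(t):=L(t,\hat{x}(t),w_n(t))$, which still satisfies $L(t,\hat{x}(t),w_n(t))\le\tilde{\zeta}_n(t)\le a_{\hat c}(t)$ (the lower bound is an equality, the upper bound holds by $H(L)(iii)$ with $r=\hat c$), so $(w_n,\tilde{\zeta}_n)\in S^1_G$ as well, and it does not change $w_n$. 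What must be argued is that $\tilde{\zeta}_n$ still converges to $\hat{L}^{**}(\cdot,\hat{x}(\cdot),\hat{u}(\cdot))$ in $\|\cdot\|_w$. Since $0\le\zeta_n-\tilde{\zeta}_n\le a_{\hat c}-L(\cdot,\hat{x}(\cdot),w_n(\cdot))$ is only uniformly bounded, not a priori small, I expect this to be the main obstacle; the resolution is to use that on the uniformly integrable (indeed $L^\infty$-bounded) set $S^1_G$, by Bourgain's property $U$ and Gutman's theorem (invoked exactly as in the proof of Proposition 3.5) the $\|\cdot\|_w$-topology coincides with the weak $L^1$-topology, and then a lower-semicontinuity/barycenter argument: any weak-$L^1$ cluster point $\rho$ of $\tilde{\zeta}_n$ is a selector of $\overline{\rm conv}\,G$'s second coordinate with $\rho(t)\ge\hat{L}^{**}(t,\hat{x}(t),\hat{u}(t))$ a.e. by joint convexity/lower semicontinuity of $\hat{L}^{**}(t,\hat{x}(t),\cdot)$ combined with $w_n\xrightarrow{w}\hat{u}$, while $\int_0^b\tilde{\zeta}_n\le\int_0^b\zeta_n\to\int_0^b\hat{L}^{**}(t,\hat{x}(t),\hat{u}(t))\,dt$ forces $\rho=\hat{L}^{**}(\cdot,\hat{x}(\cdot),\hat{u}(\cdot))$ a.e.; since every subsequence has a further subsequence with this unique limit, $\tilde{\zeta}_n\xrightarrow{w}\hat{L}^{**}(\cdot,\hat{x}(\cdot),\hat{u}(\cdot))$, hence $\xrightarrow{\|\cdot\|_w}$ as well. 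Renaming $\hat{u}_n:=w_n$ then gives both conclusions. \qed
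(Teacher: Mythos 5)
Your proposal is correct and follows essentially the same route as the paper: identify $(\hat{u}(\cdot),\hat{L}^{**}(\cdot,\hat{x}(\cdot),\hat{u}(\cdot)))$ as an element of $S^1_{{\rm conv}\,\Gamma(\cdot,\hat{x}(\cdot))}$ via Proposition \ref{prop7}, apply the weak-norm density result of Proposition \ref{prop8} to $\Gamma(\cdot,\hat{x}(\cdot))$, and read off the two components of the approximating selectors. Your extra care at the last step (replacing $\zeta_n$ by $L(\cdot,\hat{x}(\cdot),w_n(\cdot))$ and justifying its convergence by the squeeze $\hat{L}^{**}\leq L\leq\zeta_n$ together with weak compactness and lower semicontinuity) is exactly the justification the paper compresses into its final three displayed implications, and your reading of the statement's $\hat{x}_n$ as $\hat{x}$ matches the paper's own proof.
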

\begin{proof}
	Since $(\hat{x},\hat{u})\in P_c$, it follows by Proposition \ref{prop7} that
	\begin{eqnarray*}
		&&(\hat{u}(t),\hat{L}^{**}(t,\hat{x}(t),\hat{u}(t)))\in\overline{\rm conv}\,\Gamma(t,\hat{x}(t))\ \mbox{for almost all}\ t\in T,\\
		&\Rightarrow&\hat{w}(\cdot)=(\hat{u}(\cdot),\hat{L}^{**}(\cdot,\hat{x}(\cdot),\hat{u}(\cdot)))\in S^1_{{\rm conv}\,\Gamma(\cdot,\hat{x}(\cdot))}.
	\end{eqnarray*}
	
	On account of Proposition \ref{prop8}, we can find $w_n\in S^1_{\Gamma(\cdot,\hat{x}(\cdot))}$ such that
	$$w_n\stackrel{||\cdot||_w}{\longrightarrow}w\ \mbox{in}\ L^1(T,\RR^m).$$
	
	However,
	 $w_n(\cdot)=(\hat{u}_n(\cdot),\eta_n(\cdot))$ with $\hat{u}_n\in S^1_{U(\cdot,\hat{x}(\cdot))}$, $L(t,\hat{x}(t),\hat{u}_n(t))\leq\eta_n(t)\leq a_{\hat{c}}(t)$ for almost all $t\in T$, and all $n\in\NN$. Therefore
	\begin{eqnarray*}
		&&\hat{u}_n\stackrel{||\cdot||_w}{\longrightarrow}\hat{u}\ \mbox{in}\ L^1(T,\RR^m)\\
		\mbox{and}&&\sup\limits_{t\in T}\int^t_0(\eta_n(s)-\hat{L}^{**}(s,\hat{x}(s),\hat{u}_n(s)))ds\rightarrow 0,\\
		&\Rightarrow&\sup\limits_{t\in T}\int^t_0(L(s,\hat{x}(s),\hat{u}_n(s))-\hat{L}^{**}(s,\hat{x}(s),\hat{u}_n(s)))ds\rightarrow 0,\\
		&\Rightarrow&L(\cdot,\hat{x}(\cdot),\hat{u}_n(\cdot))\stackrel{||\cdot||_w}{\longrightarrow}\hat{L}^{**}(\cdot,\hat{x}(\cdot),\hat{u}(\cdot))\ \mbox{in}\ L^1(\Omega).
	\end{eqnarray*}
The proof  of Proposition 3.6 is now complete.
\end{proof}

Now we can establish the density of $P$ in $P_c$ in the space $C(T,\RR^N)\times L^1(T,\RR^m)_w$.
\begin{prop}\label{prop10}
	If hypotheses $H(A),H(f),H(U),H(L),H_0$ hold and $(\hat{x},\hat{u})\in P_c$, then we can find $\{(\hat{x}_n,\hat{v}_n)\}_{n\geq 1}\subseteq P$ such that
	\begin{eqnarray*}
		&&\hat{x}_n\rightarrow\hat{x}\ \mbox{in}\ C(T,\RR^N),\\
		&&v_n \xrightarrow[||\cdot||_w]{w} \hat{u}\ \mbox{in}\ L^1(T,\RR^m),\\
		&&L(\cdot,\hat{x}_n(\cdot),\hat{v}_n(\cdot))\stackrel{||\cdot||_w}{\longrightarrow}\hat{L}^{**}(\cdot,\hat{x}(\cdot),\hat{u}(\cdot)).
	\end{eqnarray*}
\end{prop}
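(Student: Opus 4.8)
The plan is to fix an admissible pair $(\hat x,\hat u)\in P_c$, apply Proposition \ref{prop9} to obtain $\hat u_n\in S^1_{U(\cdot,\hat x(\cdot))}$ with $\hat u_n\xrightarrow{||\cdot||_w}\hat u$ in $L^1(T,\RR^m)$ and $L(\cdot,\hat x(\cdot),\hat u_n(\cdot))\xrightarrow{||\cdot||_w}\hat L^{**}(\cdot,\hat x(\cdot),\hat u(\cdot))$ in $L^1(T)$, and then, for every $n$, to manufacture an \emph{original} admissible pair $(\hat x_n,\hat v_n)\in P$ in which $\hat v_n(t)$ is, up to a slack $\epsilon_n=1/n$, the metric projection of $\hat u_n(t)$ onto $U(t,\hat x_n(t))$. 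For the construction, given $y\in C(T,\RR^N)$ I would set
$$R_n(y)=\big\{v\in S^1_{U(\cdot,y(\cdot))}:\ |v(t)-\hat u_n(t)|\le d\big(\hat u_n(t),U(t,y(t))\big)+\epsilon_n\ \text{for a.a.}\ t\in T\big\}.$$
For each $y$, the pointwise multifunction $t\mapsto U(t,y(t))\cap\overline{B}\big(\hat u_n(t),d(\hat u_n(t),U(t,y(t)))+\epsilon_n\big)$ is $P_k(\RR^m)$-valued, measurable, and nonempty (it contains the nearest point of $U(t,y(t))$ to $\hat u_n(t)$), so the Yankov--von Neumann--Aumann selection theorem shows $R_n(y)\neq\emptyset$; $R_n(y)$ is plainly closed and decomposable. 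The genuinely delicate point is that $y\mapsto R_n(y)$ is lsc: here the \emph{strict} slack $\epsilon_n>0$ is indispensable, since it replaces the merely usc metric projection onto the moving compacta $U(t,y(t))$ by an lsc ``fattening'' (the open ball of radius $d(\hat u_n(t),U(t,y(t)))+\epsilon_n$ always meets $U(t,y(t))$), and then $H(U)(i),(ii)$ yield lower semicontinuity by a Filippov--type argument of the sort underlying Theorem \ref{th2}. Theorem \ref{th2} then gives a continuous $\sigma_n:C(T,\RR^N)\to L^1(T,\RR^m)$ with $\sigma_n(y)\in R_n(y)$ for all $y$.

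Next I would repeat the fixed point argument of Proposition \ref{prop4}. Exactly as for \eqref{eq8}--\eqref{eq9}, the map $y\mapsto K\big(f(\cdot,y(\cdot))\sigma_n(y)(\cdot)\big)$ is continuous (continuity of $\sigma_n$, $H(f)$, and the complete continuity of $K$ from Claim \ref{claim3}) and compact (via $H(f)(iii)$, $H(U)(iii)$, the Dunford--Pettis theorem and the complete continuity of $K$), while the a priori estimate of Claim \ref{claim4} — take the inner product of $-x'(t)\in\lambda A(\lambda^{-1}x(t))+\lambda f(t,x(t))\sigma_n(x)(t)$ with $x(t)$, use $0\in A(0)$, $|\sigma_n(x)(t)|\le a_0(t)$ and Gronwall's inequality — bounds the set $\{y:y=\lambda K(f(\cdot,y(\cdot))\sigma_n(y)(\cdot)),\ 0<\lambda<1\}$ by a constant independent of $n$. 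Theorem \ref{th1} then yields $\hat x_n$ with $\hat x_n=K\big(f(\cdot,\hat x_n(\cdot))\sigma_n(\hat x_n)(\cdot)\big)$, and, setting $\hat v_n=\sigma_n(\hat x_n)$, we get $-\hat x_n'(t)\in A(\hat x_n(t))+f(t,\hat x_n(t))\hat v_n(t)$ a.e., $\hat x_n(0)=x_0$, $\hat v_n(t)\in U(t,\hat x_n(t))$ a.e., i.e. $(\hat x_n,\hat v_n)\in P$.

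To prove $\hat x_n\to\hat x$ in $C(T,\RR^N)$, put $y_n=K\big(f(\cdot,\hat x(\cdot))\hat u_n(\cdot)\big)$. Since $\sup_n||\hat u_n||_\infty\le||a_0||_\infty$, the convergence $\hat u_n\xrightarrow{||\cdot||_w}\hat u$ forces $\hat u_n\xrightarrow{w^*}\hat u$ in $L^\infty(T,\RR^m)$ (a bounded sequence each of whose $w^*$-limit points must be $\hat u$), hence — testing against $h\in L^\infty(T,\RR^N)$ and transposing $f(\cdot,\hat x(\cdot))^*h\in L^1(T,\RR^m)$ — $f(\cdot,\hat x(\cdot))\hat u_n(\cdot)\xrightarrow{w}f(\cdot,\hat x(\cdot))\hat u(\cdot)$ in $L^1(T,\RR^N)$, and the complete continuity of $K$ (Claim \ref{claim3}) gives $y_n\to K(f(\cdot,\hat x(\cdot))\hat u(\cdot))=\hat x$ in $C(T,\RR^N)$. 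Subtracting the inclusions for $\hat x_n$ and $y_n$ and using the monotonicity of $A(\cdot)$,
$$\frac{1}{2}|\hat x_n(t)-y_n(t)|^2\le\int_0^t\big(f(s,\hat x(s))\hat u_n(s)-f(s,\hat x_n(s))\hat v_n(s),\,\hat x_n(s)-y_n(s)\big)_{\RR^N}\,ds;$$
writing the first factor as $f(s,\hat x(s))(\hat u_n(s)-\hat v_n(s))+\big(f(s,\hat x(s))-f(s,\hat x_n(s))\big)\hat v_n(s)$, estimating it by $H(f)(ii),(iii)$, $H(U)(iii)$ and the slack bound $|\hat v_n(t)-\hat u_n(t)|\le d(\hat u_n(t),U(t,\hat x_n(t)))+\epsilon_n\le k(t)|\hat x(t)-\hat x_n(t)|+\epsilon_n$ (valid since $\hat u_n(t)\in U(t,\hat x(t))$, by $H(U)(ii)$), and inserting $|\hat x_n(s)-\hat x(s)|\le|\hat x_n(s)-y_n(s)|+||y_n-\hat x||_{C(T,\RR^N)}$, one reaches, for a fixed $\Phi\in L^1(T)$ and some $\nu_n\to0$,
$$||\hat x_n-y_n||^2_{C([0,t],\RR^N)}\le2\int_0^t\Phi(s)\,||\hat x_n-y_n||^2_{C([0,s],\RR^N)}\,ds+2\nu_n\,||\hat x_n-y_n||_{C(T,\RR^N)};$$
Gronwall's inequality, followed by dividing by $||\hat x_n-y_n||_{C(T,\RR^N)}$, gives $||\hat x_n-y_n||_{C(T,\RR^N)}\le2\nu_n e^{2||\Phi||_1}\to0$, so $\hat x_n\to\hat x$ in $C(T,\RR^N)$.

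The remaining conclusions are then routine. From the slack bound and $\hat x_n\to\hat x$, $||\hat v_n-\hat u_n||_\infty\le||k||_\infty||\hat x_n-\hat x||_{C(T,\RR^N)}+\epsilon_n\to0$, so $\hat v_n\xrightarrow{||\cdot||_w}\hat u$, and since $\sup_n||\hat v_n||_\infty\le||a_0||_\infty$, Proposition \ref{prop3} gives $\hat v_n\xrightarrow{w}\hat u$ in $L^1(T,\RR^m)$, i.e. $\hat v_n\xrightarrow[||\cdot||_w]{w}\hat u$. Choosing $r>0$ with $||\hat x||_\infty$, $\sup_n||\hat x_n||_\infty$ and $||a_0||_\infty$ all $\le r$, hypothesis $H(L)(ii)$ yields $|L(t,\hat x_n(t),\hat v_n(t))-L(t,\hat x(t),\hat u_n(t))|\le\vartheta_r(t)\big(||\hat x_n-\hat x||_{C(T,\RR^N)}+||\hat v_n-\hat u_n||_\infty\big)\to0$ in $L^1(T)$, hence also in the weak norm; combined with the second convergence of Proposition \ref{prop9} this gives $L(\cdot,\hat x_n(\cdot),\hat v_n(\cdot))\xrightarrow{||\cdot||_w}\hat L^{**}(\cdot,\hat x(\cdot),\hat u(\cdot))$. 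The step I expect to be the main obstacle is the lower semicontinuity of $y\mapsto R_n(y)$ (the continuous-selection step), where the strict positivity of $\epsilon_n$ has to be used with care; a secondary subtlety is the circular-looking coupling between $||\hat v_n-\hat u_n||$ and $||\hat x_n-\hat x||$, which is broken by establishing $y_n\to\hat x$ independently via the complete continuity of $K$ and then running the Gronwall estimate above.
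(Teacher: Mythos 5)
Your proposal is correct and follows essentially the same route as the paper: approximate $\hat{u}$ by Proposition \ref{prop9}, build original admissible pairs via the Filippov-type constraint multifunction with a $1/n$ slack (your $R_n$ is just a tightened version of the paper's $V_n$ in \eqref{eq23}) together with the selection/fixed-point machinery of Proposition \ref{prop4}, prove trajectory convergence by monotonicity of $A$ plus Gronwall, and transfer the cost convergence through $H(L)(ii)$. The only difference is technical: you compare $\hat{x}_n$ with the auxiliary trajectory $y_n=K(f(\cdot,\hat{x}(\cdot))\hat{u}_n(\cdot))$ and avoid subsequences, whereas the paper compares $\hat{x}_n$ directly with $\hat{x}$ via a three-term splitting and a limsup argument \eqref{eq27}--\eqref{eq34}; both are valid.
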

\begin{proof}
	According to Proposition \ref{prop9}, we can find $\{\hat{u}_n\}_{n\geq 1}\subseteq S^1_{U(\cdot,\hat{x}(\cdot))}$ such that
	\begin{equation}\label{eq21}
		\hat{u}_n\stackrel{||\cdot||_w}{\longrightarrow}\hat{u}\ \mbox{in}\ L^1(T,\RR^m)\ \mbox{and}\ L(\cdot,\hat{x}(\cdot),\hat{u}_n(\cdot))\stackrel{||\cdot||_w}{\longrightarrow}\hat{L}^{**}(\cdot,\hat{x}(\cdot),\hat{u}(\cdot))\ \mbox{in}\ L^1(T).
	\end{equation}
	
	Since $\{\hat{u}\}_{n\geq 1}\subseteq L^1(T,\RR^m)$ is bounded (see hypothesis $H(U)(iii)$), 
	it follows by Proposition \ref{prop3} that
	\begin{equation}\label{eq22}
		\hat{u}_n\stackrel{w}{\rightarrow}\hat{u}\ \mbox{in}\ L^1(T,\RR^m).
	\end{equation}
	
	Consider the multifunction $V_n:T\times\RR^N\rightarrow 2^{\RR^m}\backslash\{\emptyset\}$ defined by
	\begin{equation}\label{eq23}
		V_n(t,x)=\{\xi\in\RR^m:|\hat{u}_n(t)-\xi|\leq k(t)|\hat{x}(t)-x|+\frac{1}{n},\ \xi\in U(t,x)\}
	\end{equation}
	(see hypothesis $H(U)(ii)$).

We consider the following control system:
	\begin{equation}\tag{$24_n$}\label{eq24n}
		\left\{\begin{array}{l}
			-x'(t)\in A(x(t))+f(t,x(t))v(t)\ \mbox{for almost all}\ t\in T=[0,b],\\
			x(0)=x_0,\ v\in S^1_{V_n(\cdot,x(\cdot))}\,.
		\end{array}\right\}
	\end{equation}
	
	Reasoning as in the proof of Proposition \ref{prop4}, we can show that for every $n\in\NN$, problem \eqref{eq24n} has admissible state-control pairs. So, let $(\hat{x}_n,\hat{v}_n)$ be such a pair for \eqref{eq24n}. Then
	$$(\hat{x}_n,\hat{v}_n)\in P\ \mbox{for all}\ n\in\NN.$$
	
	As in the proof of Proposition \ref{prop4}, using the theorem of Baras \cite{2}, we see that
	$$\{\hat{x}_n\}_{n\geq 1}\subseteq C(T,\RR^N)\ \mbox{is relatively compact}.$$
	\stepcounter{equation}
	So, we may assume that
	\begin{equation}\label{eq25}
		\hat{x}_n\rightarrow\hat{x}^*\ \mbox{in}\ C(T,\RR^N).
	\end{equation}
	
	We have
	\begin{eqnarray*}
		&&-\hat{x}'_n(t)\in A(\hat{x}_n(t))+f(t,\hat{x}_n(t))v_n(t)\ \mbox{for almost all}\ t\in T,\ \hat{x}_n(0)=x_0,\ n\in\NN,\\
		&&-\hat{x}'(t)\in A(\hat{x}(t))+f(t,\hat{x}(t))\hat{u}(t)\ \mbox{for almost all}\ t\in T,\ \hat{x}(0)=x_0.
	\end{eqnarray*}
	
	Exploiting the monotonicity of $A(\cdot)$, we obtain
	\begin{eqnarray}\label{eq26}
		&&(\hat{x}'_n(t)-\hat{x}'(t),\hat{x}_n(t)-\hat{x}(t))_{\RR^N}+(f(t,\hat{x}_n(t))\hat{v}_n(t)-f(t,\hat{x}(t))\hat{u}(t),\hat{x}_n(t)-\nonumber\\
		&&\hat{x}(t))_{\RR^N}\leq 0\ \mbox{for almost all}\ t\in T,\nonumber\\
		&\Rightarrow&\frac{1}{2}\frac{d}{dt}|\hat{x}_n(t)-\hat{x}(t)|^2+
(f(t,\hat{x}_n((t)))\hat{v}_n(t)-f(t,\hat{x}(t))\hat{u}(t),\hat{x}_n(t)-\hat{x}(t))_{\RR^N}\leq 0\nonumber\\
		&&\mbox{for almost all}\ t\in T,\nonumber\\
		&\Rightarrow&\frac{1}{2}|\hat{x}_n(t)-\hat{x}(t)|^2+\int^1_0(f(s,\hat{x}_n(s))\hat{v}_n(s)-
f(s,\hat{x}(s))\hat{u}(s),\hat{x}_n(s)-\nonumber\\
		&&\hat{x}(s))_{\RR^N}ds\leq 0\ \mbox{for all}\ n\in\NN.
	\end{eqnarray}
	
	We estimate the integral on the left-hand side of (\ref{eq26}). Then
	\begin{eqnarray}\label{eq27}
		&&\left|\int^t_0(f(s,\hat{x}_n(s))\hat{v}_n(s)-f(s,\hat{x}(s))\hat{u}(s),\hat{x}_n(s)-\hat{x}(s))_{\RR^N}ds\right|\nonumber\\
		&\leq&\left|\int^t_0(f(s,\hat{x}_n(s))\hat{v}_n(s)-f(s,\hat{x}_n(s))\hat{u}_n(s),\hat{x}_n(s)-\hat{x}(s))_{\RR^N}ds\right|\nonumber\\
		&+&\left|\int^t_0(f(s,\hat{x}_n(s))\hat{u}_n(s)-f(s,\hat{x}_n(s))\hat{u}(s),\hat{x}_n(s)-\hat{x}(s))_{\RR^N}ds\right|\nonumber\\
		&+&\left|\int^t_0(f(s,\hat{x}_n(s))\hat{u}(s)-f(s,\hat{x}(s))\hat{u}(s),\hat{x}_n(s)-\hat{x}(s))_{\RR^N}ds\right|.
	\end{eqnarray}
	
	We examine each summand on the right-hand side of (\ref{eq27}). We have
	\begin{eqnarray}\label{eq28}
		&&\left|\int^t_0(f(s,\hat{x}_n(s))\hat{v}_n(s)-f(s,\hat{x}_n(s))\hat{u}_n(s),\hat{x}_n(s)-\hat{x}(s))_{\RR^N}ds\right|\nonumber\\
		&=&\left|\int^t_0(\hat{v}_n(s)-\hat{u}_n(s),f(s,\hat{x}_n(s))^*(\hat{x}_n(s)-\hat{x}(s)))_{\RR^N}ds\right|\nonumber\\
		&\leq&\left|\int^t_0|\hat{v}_n(s)-\hat{u}_n(s)|\ ||f(s,\hat{x}_n(s))^*||_{\mathcal{L}}|\hat{x}_n(s)-\hat{x}(s)|ds\right|\nonumber\\
		&=&\left|\int^t_0|\hat{v}_n(s)-\hat{u}_n(s)|\ ||f(s,\hat{x}_n(s))||_{\mathcal{L}}|\hat{x}_n(s)-\hat{x}(s)|ds\right|\nonumber\\
		&\leq&\int^t_0[k(s)|\hat{x}_n(s)-\hat{x}(s)|+\frac{1}{n}]a_1(s)|\hat{x}_n(s)-\hat{x}(s)|ds\nonumber\\
		&&\mbox{for some}\ a_1\in L^1(T),\ \mbox{all}\ n\in\NN\ (\mbox{see (\ref{eq23}), (\ref{eq25}) and hypothesis}\ H(f)(iii))\nonumber\\
		&\leq&\int^t_0a_2(s)|\hat{x}_n(s)-\hat{x}(s)|^2ds+\frac{c_2}{n}||\hat{x}_n-\hat{x}||_{C(T,\RR^N)}\\
		&&\mbox{for some}\ a_2\in L^1(T),c_2>0\ \mbox{and all}\ n\in\NN.\nonumber
	\end{eqnarray}
	
	For the second summand we have
	\begin{eqnarray}\label{eq29}
		&&\left|\int^t_0(f(s,\hat{x}_n(s))\hat{u}_n(s)-f(s,\hat{x}_n(s))\hat{u}(s),\hat{x}_n(s)-\hat{x}(s))_{\RR^N}ds\right|\nonumber\\
		&=&\left|\int^t_0(\hat{u}_n(s)-\hat{u}_n(s),f(s,\hat{x}_n(s))^*(\hat{x}_n(s)-\hat{x}(s))_{\RR^m}ds\right|.
	\end{eqnarray}
	
	We know that bounded sets in $L^{\infty}(T,\RR^m)$ furnished with the $w^*$-topology, are metrizable (see \cite[p. 230]{23}). Hence hypothesis $H(U)(iii)$ and (\ref{eq25}) imply that
	\begin{equation}\label{eq30}
		\hat{u}_n\stackrel{w^*}{\rightarrow}\hat{u}\ \mbox{in}\ L^{\infty}(T,\RR^m).
	\end{equation}
	
	Also note that
	$$f(s,\hat{x}_n(s))^*(\hat{x}_n(s)-\hat{x}(s))\rightarrow f(s,\hat{x}^*(s))^*(\hat{x}^*(s)-\hat{x}(s))\ \mbox{for almost all}\ s\in T.$$
	
	So, by the Lebesgue dominated convergence theorem, we have
	\begin{equation}\label{eq31}
		f(\cdot,\hat{x}_n(\cdot))^*(\hat{x}_n-\hat{x})(\cdot)\rightarrow f(\cdot,\hat{x}^*(\cdot))(\hat{x}^*-\hat{x})(\cdot)\ \mbox{in}\ L^1(T,\RR^m).
	\end{equation}
	
	From (\ref{eq29}), (\ref{eq30}), (\ref{eq31}) it follows that
	\begin{equation}\label{eq32}
		\left|\int^t_0(f(s,\hat{x}_n(s))\hat{u}_n(s)-f(s,\hat{x}_n(s))\hat{u}(s),\hat{x}_n(S)-\hat{x}(s))_{\RR^N}ds\right|\rightarrow 0.
	\end{equation}
	
	Finally, for the third summand, we have
	\begin{eqnarray}\label{eq33}
		&&\left|\int^t_0(f(s,\hat{x}_n(s))\hat{u}(s)-f(s,\hat{x}_n(s))\hat{u}(s),\hat{x}_n(s)-\hat{x}(s))_{\RR^N}ds\right|\nonumber\\
		&\leq&c_3\int^t_0 l_r(s)|\hat{x}_n(s)-\hat{x}(s)|^2ds\ \mbox{with}\ r=\sup\limits_{n\geq 1}||\hat{x}_n||_{C(T,\RR^N)}<\infty,\ \mbox{for some}\ c_3>0\\
		&&(\mbox{see hypotheses}\ H(f)(ii),H(U)(iii)).\nonumber
	\end{eqnarray}
	
	We return to (\ref{eq27}), pass to the limit as $n\rightarrow\infty$, and use (\ref{eq28}), (\ref{eq32}), (\ref{eq33}) and (\ref{eq25}). Then
	\begin{eqnarray}\label{eq34}
		&&\limsup\limits_{n\rightarrow\infty}\left|\int^t_0(f(s,\hat{x}_n(s))\hat{u}_n(s)-f(s,\hat{x}(s))\hat{u}(s),\hat{x}_n(s)-\hat{x}(s))_{\RR^N}ds\right|\nonumber\\
		&\leq&\int^t_0a_3(s)|\hat{x}^*(s)-\hat{x}(s)|^2ds\ \mbox{for some}\ a_3\in L^1(T)\ (\mbox{see (\ref{eq25})}).
	\end{eqnarray}
	
	So, if in (\ref{eq26}) we pass to the limit as $n\rightarrow\infty$, and use (\ref{eq25}) and (\ref{eq34}), we get
	\begin{eqnarray*}
		&&|\hat{x}^*(t)-\hat{x}(t)|^2\leq\int^t_02a_3(s)|\hat{x}^*(s)-\hat{x}(s)|^2ds\ \mbox{for all}\ t\in T,\\
		&\Rightarrow&\hat{x}^*=\hat{x}\ (\mbox{by Gronwall's inequality}).
	\end{eqnarray*}
	
	Therefore for the original sequence we have
	\begin{equation}\label{eq35}
		\hat{x}_n\rightarrow\hat{x}\ \mbox{in}\ C(T,\RR^N).
	\end{equation}
	
	From (\ref{eq23}) and (\ref{eq35}), we see that
	\begin{eqnarray*}
		&&||\hat{u}_n-\hat{v}_n||_1\rightarrow 0,\\
		&\Rightarrow&\hat{v}_n\xrightarrow[||\cdot||_w]{w}\hat{u}\ \mbox{in}\ L^1(T,\RR^N)\ (\mbox{see (\ref{eq21}) and (\ref{eq22})}).
	\end{eqnarray*}
	
	Finally, recall (see (\ref{eq21})) that
	\begin{equation}\label{eq36}
		L(\cdot,\hat{x}(\cdot),\hat{u}_n(\cdot))\stackrel{||\cdot||_w}{\longrightarrow}\hat{L}^{**}(\cdot,\hat{x}(\cdot),\hat{u}(\cdot))\ \mbox{in}\ L^1(T).
	\end{equation}
	
	On account of hypotheses $H(L)(ii)$ and (\ref{eq23}), (\ref{eq35}), we have
	\begin{equation}\label{eq37}
		L(\cdot,\hat{x}_n(\cdot),\hat{v}_n(\cdot))-L(\cdot,\hat{x}(\cdot),\hat{u}(\cdot))\stackrel{||\cdot||_w}{\longrightarrow} 0.
	\end{equation}
	
	From (\ref{eq36}) and (\ref{eq37}) we obtain $$L(\cdot,\hat{x}_n(\cdot),\hat{v}_n(\cdot))\stackrel{||\cdot||_w}{\longrightarrow}\hat{L}^{**}(\cdot,\hat{x}(\cdot),\hat{v}(\cdot))\ \mbox{in}\ L^1(T,\RR).$$
The proof  of Proposition 3.7 is now complete.
\end{proof}

Now we are ready to show that our first relaxation method which produces problem (\ref{eq18}), is admisible.
\begin{theorem}\label{th11}
	If hypotheses $H(A),H(f),H(U),H(L),H_0$ hold, then the following properties hold:
	\begin{itemize}
		\item[(a)] there exists $(\hat{x},\hat{u})\in P_c$ such that
		$$J_r(\hat{x},\hat{u})=m_r;$$
		\item[(b)] $m_r=m$;
		\item[(c)] there exists a sequence $\{(\hat{x}_n,\hat{u}_n)\}_{n\geq 1}\subseteq P$ such that
		\begin{eqnarray*}
			&&J(\hat{x}_n,\hat{u}_n)\downarrow m_r\\
			&\mbox{and}&\hat{x}_n\rightarrow\hat{x}\ \mbox{in}\ C(T,\RR^N)\ \mbox{and}\ \hat{u}_n\xrightarrow[||\cdot||_w]{w}\hat{u}\ \mbox{in}\ L^1(T,\RR^m)
		\end{eqnarray*}
	\end{itemize}
\end{theorem}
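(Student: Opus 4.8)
The plan is to prove the three assertions in order, drawing on the machinery assembled in Propositions \ref{prop4}--\ref{prop10}. For part (a), I would first argue that $J_r$ is lower semicontinuous on $P_c$ with respect to the topology of $C(T,\RR^N)\times L^1(T,\RR^m)_w$. The integrand $\hat{L}^{**}(t,x,\cdot)$ is convex in $u$ by construction, and by Proposition \ref{prop7}(b) it is the lower envelope of $\overline{\rm conv}\,\Gamma(t,x)$, which by Proposition \ref{prop6} is graph measurable with usc dependence on $x$; together with the uniform bound $\hat{L}^{**}(t,x,u)\leq a_{\hat{c}}(t)$ on $P_c$ coming from \eqref{eq16} and $H(L)(iii)$, this puts us in the setting of a standard weak lower semicontinuity theorem for integral functionals with a Carath\'eodory-type normal integrand convex in the last variable (e.g. the Ioffe--Olech type result). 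Combined with the sequential compactness of $P_c$ in $C(T,\RR^N)\times L^1(T,\RR^m)_w$ from Proposition \ref{prop5}, taking a minimizing sequence $\{(x_n,u_n)\}\subseteq P_c$ with $J_r(x_n,u_n)\downarrow m_r$ and extracting a convergent subsequence yields a limit $(\hat{x},\hat{u})\in P_c$ with $J_r(\hat{x},\hat{u})\leq m_r$, hence equality. Here one must check $m_r>-\infty$, which is immediate since $J(x,u)\geq -\int_0^b a_{\hat{c}}(t)\,dt$ on $P_c$ using $H(L)(iii)$ with $r=\hat{c}$, and that $\hat{L}^{**}\leq\hat{L}$ so $m_r\leq m<\infty$.

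For part (b), the inequality $m_r\leq m$ is clear because every $(x,u)\in P$ lies in $P_c$ and satisfies $\hat{L}^{**}(t,x,u)\leq\hat{L}(t,x,u)=L(t,x,u)$ pointwise, so $J_r(x,u)\leq J(x,u)$ and one infimum dominates the other. The reverse inequality $m\leq m_r$ is where the approximation result of Proposition \ref{prop10} does the work: applying it to the optimal relaxed pair $(\hat{x},\hat{u})$ from part (a), we obtain $\{(\hat{x}_n,\hat{v}_n)\}\subseteq P$ with $\hat{x}_n\to\hat{x}$ in $C(T,\RR^N)$ and $L(\cdot,\hat{x}_n,\hat{v}_n)\xrightarrow{\|\cdot\|_w}\hat{L}^{**}(\cdot,\hat{x},\hat{u})$ in $L^1(T)$. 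Convergence in the weak norm of a sequence that is bounded in $L^1$ (which holds here, since $|L(t,\hat{x}_n,\hat{v}_n)|\leq a_{\hat{c}}(t)$ by $H(L)(iii)$) implies, by Proposition \ref{prop3}, weak $L^1$-convergence, and in particular $\int_0^b L(t,\hat{x}_n,\hat{v}_n)\,dt\to\int_0^b\hat{L}^{**}(t,\hat{x},\hat{u})\,dt$, i.e. $J(\hat{x}_n,\hat{v}_n)\to J_r(\hat{x},\hat{u})=m_r$. Since each $J(\hat{x}_n,\hat{v}_n)\geq m$, passing to the limit gives $m_r\geq m$, hence $m_r=m$.

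For part (c), I would reuse the sequence constructed in part (b): with $(\hat{x}_n,\hat{u}_n):=(\hat{x}_n,\hat{v}_n)\in P$ we already have $J(\hat{x}_n,\hat{u}_n)\to m_r$, $\hat{x}_n\to\hat{x}$ in $C(T,\RR^N)$, and $\hat{u}_n=\hat{v}_n\xrightarrow[\|\cdot\|_w]{w}\hat{u}$ in $L^1(T,\RR^m)$ as given by Proposition \ref{prop10}. To upgrade the convergence $J(\hat{x}_n,\hat{u}_n)\to m_r$ to the monotone statement $J(\hat{x}_n,\hat{u}_n)\downarrow m_r$, it suffices to pass to a subsequence along which $J(\hat{x}_n,\hat{u}_n)$ is nonincreasing (possible since the sequence converges to $m_r$ and each term is $\geq m=m_r$, so one can successively discard terms to make the sequence decreasing); this preserves all the other convergences. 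The main obstacle in the whole argument is the weak lower semicontinuity of $J_r$ in part (a): one must be careful that the topology on the control component is only the weak $L^1$-topology (not norm), so the relevant theorem must handle an integrand convex in $u$ with merely usc (not continuous) $x$-dependence, and the uniform $L^1$-domination from $H(L)(iii)$ is exactly what licenses invoking the appropriate normal-integrand lower semicontinuity result; everything else is bookkeeping built on the preceding propositions.
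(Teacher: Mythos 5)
Your proposal is correct and follows essentially the same route as the paper: part (a) via a minimizing sequence, the sequential compactness of $P_c$ from Proposition \ref{prop5}, and a weak lower semicontinuity theorem for convex normal integrands (the paper cites Proposition 3.151 of Gasinski \& Papageorgiou where you invoke an Ioffe--Olech type result); parts (b) and (c) via the approximating sequence of Proposition \ref{prop10} together with the elementary inequality $m_r\leq m$. Your extra checks (that $m_r>-\infty$ and the subsequence extraction to get the monotone convergence $J(\hat{x}_n,\hat{u}_n)\downarrow m_r$) are minor refinements of details the paper leaves implicit, not a different argument.
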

\begin{proof}
	$(a)$ Let $\{(\hat{y}_n,\hat{v}_n)\}_{n\geq 1}\subseteq P_c$ be a minimizing sequence for the relaxed problem (\ref{eq18}). So, we have
	$$J_r(\hat{y}_n,\hat{v}_n)\downarrow m_r.$$
	
	We know that
	$$\{(\hat{y}_n,\hat{v}_n)\}_{n\geq 1}\subseteq C(T,\RR^N)\times L^1(T,\RR^m)_w\ \mbox{is relatively compact}$$
	(see Proposition \ref{prop5}). So, we may assume that
	\begin{equation}\label{eq38}
		\hat{y}_n\rightarrow\hat{x}\ \mbox{in}\ C(T,\RR^N)\ \mbox{and}\ \hat{v}_n\stackrel{w}{\rightarrow}\hat{u}\ \mbox{in}\ L^1(T,\RR^m).
	\end{equation}
	
	Then from (\ref{eq38}) and Proposition 3.151 of Gasinski \& Papageorgiou \cite[p. 441]{18}, we have
	\begin{equation}\label{eq39}
		J_r(\hat{x},\hat{u})\leq\liminf\limits_{n\rightarrow\infty}J_r(\hat{y}_n,\hat{v}_n)=m_r.
	\end{equation}
	
	However, from (\ref{eq38}) and Proposition \ref{prop5}, we have
	\begin{eqnarray*}
		&&(\hat{x},\hat{u})\in P_c,\\
		&\Rightarrow&J_r(\hat{x},\hat{u})=m_r\ (\mbox{see (\ref{eq39})}).
	\end{eqnarray*}
	
	$(b)$ and $(c)$: From Proposition \ref{prop10}, we know that there exists a sequence $\{(\hat{x}_n,\hat{u}_n)\}_{n\geq 1}\subseteq P$ such that
	\begin{eqnarray}\label{eq40}
		&&\hat{x}_n\rightarrow\hat{x}\ \mbox{in}\ C(T,\RR^N),\ \hat{u}_n\xrightarrow[||\cdot||_w]{w}\hat{u}\ \mbox{in}\ L^1(T,\RR^m),\nonumber\\
		&&L(\cdot,\hat{x}(\cdot),\hat{u}_n(\cdot))\stackrel{||\cdot||_w}{\longrightarrow}\hat{L}^{**}(\cdot,\hat{x}(\cdot),\hat{u}(\cdot))\ \mbox{in}\ L^1(T).
	\end{eqnarray}
	
	We have
	\begin{eqnarray*}
		&&|J_r(\hat{x}_n,\hat{u}_n)-J_r(\hat{x},\hat{u})|\\
		&\leq&||L(\cdot,\hat{x}_n(\cdot),\hat{u}_n(\cdot))-\hat{L}^{**}(\cdot,\hat{x}(\cdot),\hat{u}(\cdot))||_w\rightarrow 0\ (\mbox{see (\ref{eq40})}),\\
		&\Rightarrow&J(\hat{x},\hat{u}_n)\rightarrow J_r(\hat{x},\hat{u})=m_r.
	\end{eqnarray*}
	
	We know that
	\begin{eqnarray*}
		&&m\leq J(\hat{x}_n,\hat{u}_n)\ \mbox{for all}\ n\in\NN,\\
		&\Rightarrow&m\leq m_r,\\
		&\Rightarrow&m=m_r\ (\mbox{since we always have}\ m_r\leq m).
	\end{eqnarray*}
The proof  of Theorem 3.8 is now complete.
\end{proof}

\section{Second relaxation method}

In this section we present an alternative relaxation method based on Young measures.

So, now the control constraint set for the relaxed system is
$$\Sigma(t,x)=\{\mu\in R(T,\bar{B}_M):\mu(U(t,x))=1\},$$
where $M=||a_0||_{\infty}$ (see hypothesis $H(U)(iii)$) and $\bar{B}_M=\{v\in \RR^m:|v|\leq M\}$. Then, the new relaxed optimal control problem, is the following:
\begin{equation}\label{eq41}
	\left\{\begin{array}{l}
		\hat{J}_r(x,\lambda)=\int^b_0\int_{\bar{B}}L(t,x(t),u)\lambda(t)(du)dt\rightarrow \inf=\hat{m}_r,\\
		\mbox{subject to:}\ -x'(t)\in A(x(t))+\int_{\bar{B}_M}f(t,x(t))u\lambda(t)(du)\ \mbox{for a.a.}\ t\in T=[0,b]\\
		x(0)=x_0,\ \lambda\in S_{\Sigma(\cdot,x(\cdot))},
	\end{array}\right\}
\end{equation}
where
$$S_{\Sigma(\cdot,x(\cdot))}=\{\lambda:T\rightarrow P(\bar{B}_M):\lambda(\cdot)\ \mbox{is measurable},\ \lambda(t)\in\Sigma(t,x(t))\ \mbox{for almost all}\ t\in T\}.$$

Recall that on $P(\bar{B}_M)$ we consider the narrow topology $\tau_n$.

The orientor field for this new convexified control system is
$$\hat{F}(t,x)=\left\{\int_{\bar{B}_M}f(t,x)u\lambda(t)(du):\lambda\in S_{\Sigma(\cdot,x)}\right\}.$$

So, if we denote by $\hat{P}_c$ the set of admissible state-control pairs for (\ref{eq41}), then $\hat{P}_c$ is compact in $C(T,\RR^N)\times {\mathcal Y}(T\times \bar{B}_M)_{\tau^y_n}$.

We show that this new relaxed optimal control problem is equivalent to (\ref{eq18}) and hence also  admissible.
\begin{theorem}\label{th12}
	If hypotheses $H(A),H(f),H(U),G(L),H_0$ hold, then there exists $(\hat{x},\hat{\lambda})\in\hat{P}_c$ such that
	$$\hat{J}_r(\hat{x},\hat{\lambda})=\hat{m}_r=m=m_r$$
	and we can find a sequence $\{(\hat{x}_n,\hat{u}_n)\}_{n\geq 1}\subseteq P$ such that
	$$\hat{x}_n\rightarrow\hat{x}\ \mbox{in}\ C(T,\RR^N)\ \mbox{and}\ \delta_{\hat{u}_n}\stackrel{\tau^y_n}{\longrightarrow}\hat{\lambda}\ \mbox{in}\ R(T,\bar{B}_M).$$
\end{theorem}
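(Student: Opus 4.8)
The strategy is to show that the Young-measure relaxation (\ref{eq41}) is equivalent to the first relaxation (\ref{eq18}), so that all the conclusions of Theorem \ref{th11} transfer. The bridge is the identification of a transition probability $\lambda\in S_{\Sigma(\cdot,x(\cdot))}$ with its barycenter $u(t)=\int_{\bar B_M}v\,\lambda(t)(dv)\in\mathrm{conv}\,U(t,x(t))$, together with the Jensen-type inequality $\int_{\bar B_M}L(t,x(t),v)\,\lambda(t)(dv)\ge \hat L^{**}(t,x(t),u(t))$ coming from the definition of $\hat L^{**}$ as the convex conjugate in the control variable. First I would check that the map $(x,\lambda)\mapsto(x,u)$ sends $\hat P_c$ into $P_c$: indeed, since $f(t,x)$ is linear in the control, $\int_{\bar B_M}f(t,x(t))v\,\lambda(t)(dv)=f(t,x(t))u(t)$, so the dynamics in (\ref{eq41}) become exactly the dynamics in (\ref{eq2}); and $u(t)\in\mathrm{conv}\,U(t,x(t))$ because $\lambda(t)$ is supported on $U(t,x(t))$. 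Hence for any admissible $(x,\lambda)$ we get $\hat J_r(x,\lambda)\ge J_r(x,u)\ge m_r$, so $\hat m_r\ge m_r$.

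For the reverse inequality I would use Proposition \ref{prop7}: given $(\hat x,\hat u)\in P_c$ attaining $m_r$ (which exists by Theorem \ref{th11}(a)), for almost all $t$ the point $(\hat u(t),\hat L^{**}(t,\hat x(t),\hat u(t)))$ lies in $\overline{\mathrm{conv}}\,\Gamma(t,\hat x(t))$, hence by Carathéodory's theorem in $\RR^{m+1}$ it is a convex combination of at most $m+2$ points of $\Gamma(t,\hat x(t))$, whose control components lie in $U(t,\hat x(t))$. A measurable-selection argument (Yankov--von Neumann--Aumann, as used already in the proof of Proposition \ref{prop4}) produces measurable maps $t\mapsto(\alpha_i(t),u_i(t))$ realizing this, and then $\hat\lambda(t)=\sum_i\alpha_i(t)\,\delta_{u_i(t)}$ is a transition probability in $\Sigma(t,\hat x(t))$ with barycenter $\hat u(t)$ and $\int L(t,\hat x(t),v)\,\hat\lambda(t)(dv)=\sum_i\alpha_i(t)L(t,\hat x(t),u_i(t))=\hat L^{**}(t,\hat x(t),\hat u(t))$. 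Thus $(\hat x,\hat\lambda)\in\hat P_c$ with $\hat J_r(\hat x,\hat\lambda)=J_r(\hat x,\hat u)=m_r$, giving $\hat m_r\le m_r$; combined with Theorem \ref{th11}(b), $\hat m_r=m_r=m$, and $(\hat x,\hat\lambda)$ is optimal. (Existence of a minimizer can alternatively be read off from the stated compactness of $\hat P_c$ in $C(T,\RR^N)\times\mathcal Y(T\times\bar B_M)_{\tau^y_n}$ together with $\tau^y_n$-lower semicontinuity of $\hat J_r$, since $(t,u)\mapsto L(t,\hat x(t),u)$ is a bounded Carathéodory integrand on the relevant compact range; I would include whichever is shorter.)

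For the approximation statement, I would invoke Proposition \ref{prop10} applied to $(\hat x,\hat u)$: it yields $\{(\hat x_n,\hat v_n)\}\subseteq P$ with $\hat x_n\to\hat x$ in $C(T,\RR^N)$, $\hat v_n\xrightarrow{w,\,\|\cdot\|_w}\hat u$ in $L^1(T,\RR^m)$, and $L(\cdot,\hat x_n(\cdot),\hat v_n(\cdot))\xrightarrow{\|\cdot\|_w}\hat L^{**}(\cdot,\hat x(\cdot),\hat u(\cdot))$ in $L^1(T)$; rename $\hat u_n=\hat v_n$. It remains to show $\delta_{\hat u_n}\xrightarrow{\tau^y_n}\hat\lambda$ in $R(T,\bar B_M)$, i.e. $\int_0^b\varphi(t,\hat u_n(t))\,dt\to\int_0^b\int_{\bar B_M}\varphi(t,v)\,\hat\lambda(t)(dv)\,dt$ for every $\varphi\in\mathrm{Car}_b(T\times\bar B_M)$. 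Since the $\hat u_n$ are uniformly bounded by $M$ and $\hat u_n\xrightarrow{\|\cdot\|_w}\hat u$, Proposition \ref{prop3} gives $\hat u_n\xrightarrow{w}\hat u$ in every $L^p$; the associated Young measures $\delta_{\hat u_n}$ therefore have a $\tau^y_n$-convergent subsequence (the set of Young measures on $T\times\bar B_M$ is $\tau^y_n$-compact because $\bar B_M$ is compact), and its limit $\mu$ must have barycenter $\hat u(\cdot)=$ barycenter of $\hat\lambda(\cdot)$. To pin down $\mu=\hat\lambda$ I would additionally use the cost convergence: passing to the limit in $\int_0^b L(t,\hat x(t),\hat u_n(t))\,dt$ gives $\int_0^b\int L(t,\hat x(t),v)\,\mu(t)(dv)\,dt=\int_0^b\hat L^{**}(t,\hat x(t),\hat u(t))\,dt$, while Jensen forces $\int L(t,\hat x(t),v)\,\mu(t)(dv)\ge\hat L^{**}(t,\hat x(t),\hat u(t))$ pointwise, so equality holds a.e.; this, with the barycenter condition and strict convexity arguments on the conjugate (or directly the geometry of $\Gamma$), identifies $\mu(t)=\hat\lambda(t)$ a.e. Since the limit is independent of the subsequence, the whole sequence converges.

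The main obstacle is the last identification step: weak $L^1$-convergence of $\hat u_n$ to $\hat u$ only controls the barycenters of the limiting Young measure, not the measure itself, so one genuinely needs the extra information carried by the convergence of the costs to $\hat L^{**}$ rather than to a larger value, plus the fact that $\hat L^{**}(t,x,\cdot)$ is the convex envelope, to conclude that no mass of $\mu(t)$ escapes the optimal face of $\Gamma(t,\hat x(t))$. Everything else — the barycenter/linearity bookkeeping for $f$, the Carathéodory selection of the representing simplex, and the transfer of the value identities $\hat m_r=m_r=m$ — is routine given Propositions \ref{prop7}, \ref{prop10} and Theorem \ref{th11}.
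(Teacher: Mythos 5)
Your treatment of the value identities is sound and goes by a genuinely different route than the paper: you prove $\hat{m}_r\geq m_r$ by passing from a relaxed control $\lambda$ to its barycenter $u(t)=\int_{\bar{B}_M}v\,\lambda(t)(dv)$ and applying Jensen's inequality to the convex function $\hat{L}^{**}(t,x,\cdot)\leq L(t,x,\cdot)$, and $\hat{m}_r\leq m_r$ by a Carath\'eodory-plus-measurable-selection representation of the optimal pair from Theorem \ref{th11}(a). One small repair there: Carath\'eodory only gives points $(u_i(t),\eta_i(t))\in\Gamma(t,\hat{x}(t))$ with $\eta_i(t)\geq L(t,\hat{x}(t),u_i(t))$ and $\sum_i\alpha_i(t)\eta_i(t)=\hat{L}^{**}(t,\hat{x}(t),\hat{u}(t))$; you must combine this with the Jensen inequality from the first half to conclude $\sum_i\alpha_i(t)L(t,\hat{x}(t),u_i(t))=\hat{L}^{**}(t,\hat{x}(t),\hat{u}(t))$, which you assert directly. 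The paper instead runs the direct method on problem (\ref{eq41}) itself (tightness and Prokhorov for $\{\hat{\lambda}_n\}$, Baras compactness for the states, and Valadier's Fiber Product Lemma to pass to the limit in the cost), which produces the minimizer without any Carath\'eodory construction.

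The genuine gap is in the final approximation step. Having fixed a particular $\hat{\lambda}(t)=\sum_i\alpha_i(t)\delta_{u_i(t)}$ and taken $\hat{u}_n=\hat{v}_n$ from Proposition \ref{prop10}, you try to show $\delta_{\hat{u}_n}\stackrel{\tau^y_n}{\longrightarrow}\hat{\lambda}$ by arguing that any $\tau^y_n$-limit point $\mu$ of $\{\delta_{\hat{u}_n}\}_{n\geq1}$ has barycenter $\hat{u}(t)$ and satisfies $\int_{\bar{B}_M}L(t,\hat{x}(t),v)\,\mu(t)(dv)=\hat{L}^{**}(t,\hat{x}(t),\hat{u}(t))$ a.e., and that these two facts identify $\mu=\hat{\lambda}$. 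They do not: no strict convexity of $L(t,x,\cdot)$ is assumed, so they only confine $\mu(t)$ to the (possibly large) set of probability measures carried by the optimal face of $\Gamma(t,\hat{x}(t))$ over $\hat{u}(t)$. Take $L\equiv0$: the cost condition is vacuous, any measure with barycenter $\hat{u}(t)$ qualifies, and there is no reason the limit of the Diracs should be your Carath\'eodory measure; the claimed subsequence-independence then also fails. The theorem is still reachable, but the logic must be reversed, as in the paper: either (i) define $\hat{\lambda}:=\mu$, i.e.\ take as the optimal relaxed control the limit Young measure of the $\delta_{\hat{u}_n}$ themselves, checking that $\mu\in S_{\Sigma(\cdot,\hat{x}(\cdot))}$ (using $H(U)(ii)$ and $\hat{x}_n\rightarrow\hat{x}$), that its barycenter is $\hat{u}$, and that it attains $\hat{m}_r$; or (ii) start from an optimal $\hat{\lambda}$, invoke the density of Dirac Young measures in $\mathcal{Y}(T\times\bar{B}_M)$ for $\tau^y_n$ (Valadier, Proposition 8) to obtain $\hat{u}_n\in S^1_{U(\cdot,\hat{x}(\cdot))}$ with $\delta_{\hat{u}_n}\stackrel{\tau^y_n}{\longrightarrow}\hat{\lambda}$, and only then apply the $V_n$-correction of Proposition \ref{prop10} to make the controls admissible. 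That density theorem is the tool your argument is missing: weak $L^1$ convergence of controls sees only barycenters and cannot by itself yield narrow convergence of the associated Young measures to a prescribed non-Dirac limit.
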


\begin{proof}
	Let $\{(\hat{x}_n,\hat{u}_n)\}_{n\geq 1}\subseteq\hat{P}_c$ be a minimizing sequence for problem (\ref{eq41}). Evidently, $\{\hat{\lambda}_n\}_{n\geq 1}\subseteq R(T,\bar{B}_M)$ is tight, so by Prokhorov's theorem (see Papageorgiou \& Winkert \cite[Theorem 4.7.22, p.393]{23}), we have at least for a subsequence that
	\begin{equation}\label{eq42}
		\hat{\lambda}\stackrel{\tau^y_n}{\longrightarrow}\hat{\lambda}\ \mbox{in}\ R(T,\bar{B}_M).
	\end{equation}
	
	Let $\hat{x}_n\in W^{1,1}((0,b),\RR^N)=AC^1(T,\RR^N)$ be the relaxed state generated by the relaxed control $\hat{\lambda}_n\in S_{\Sigma(\cdot,x_n(\cdot))}$. We know that
	$$\{\hat{x}_n\}_{n\geq 1}\subseteq C(T,\RR^N)\ \mbox{is relatively compact (see the proof of Proposition \ref{prop4})}.$$
	
	So, we may assume that
	\begin{equation}\label{eq43}
		\hat{x}_n\rightarrow \hat{x}\ \mbox{in}\ C(T,\RR^N).
	\end{equation}
	
	We have
	$$(\hat{x},\hat{u})\in\hat{P}_c\ (\mbox{see (\ref{eq42}), (\ref{eq43})}).$$
	
	Note that
	\begin{eqnarray*}
		\hat{J}_r(\hat{x}_n,\hat{\lambda}_n)&=&\int^b_0\int_{\bar{B}_M}L(t,\hat{x}_n(t),u)\hat{\lambda}_n(t)(du)dt\\
		&=&\int^b_0\int_{\RR^N\times\bar{B}_M}L(t,x,u)(\delta_{\hat{x}_n(t)}\otimes\hat{\lambda}_n(t))(dx,du)dt.
	\end{eqnarray*}
	
	By the Fiber Product Lemma (see Valadier \cite[Lemma 11]{26}), we have
	$$\delta_{\hat{x}_n(\cdot)}\otimes\hat{\lambda}_n\stackrel{\tau^y_n}{\longrightarrow}\delta_{\hat{x}(\cdot)}\otimes\hat{\lambda}\ (\mbox{see (\ref{eq42}), (\ref{eq43})}).$$
	
	Then on account of hypotheses $H(L)$ and the definition of the narrow topology $\tau^y_n$, we have
	\begin{eqnarray*}
		\hat{m}_r&=&\lim\limits_{n\rightarrow\infty}\hat{J}_r(\hat{x}_n,\hat{\lambda}_n)\\
		&=&\lim\limits_{n\rightarrow\infty}\int^b_0\int_{\bar{B}_M}L(t,x_n(t),u)\hat{\lambda}_n(t)(du)dt\\
		&=&\int^b_0\int_{\bar{B}_M}L(t,\hat{x}(t),u)\hat{\lambda}(t)(du)dt\\
		&=&\hat{J}_r(\hat{x},\hat{\lambda})\geq\hat{m}_r\ (\mbox{since}\ (\hat{x},\hat{\lambda})\in\hat{P}_c),\\
		\Rightarrow&&\hat{J}_r(\hat{x},\hat{\lambda})=\hat{m}_r.
	\end{eqnarray*}
	
	By the density of the Dirac Young measures in ${\mathcal Y}(T\times\bar{B}_M)$ with the Young narrow topology $\tau^y_n$ (see Valadier \cite[Proposition 8]{26}),  we can find $\{\hat{u}_n\}_{n\geq 1}\subseteq S^1_{U(\cdot,\hat{x}(\cdot))}$ such that
	\begin{equation}\label{eq44}
		\delta_{\hat{u}_n(\cdot)}\stackrel{\tau^y_n}{\longrightarrow}\hat{\lambda}.
	\end{equation}
	
	As in the proof of Proposition \ref{prop10}, we consider the multifunction $(t,x)\rightarrow V_n(t,x)$ defined by
	$$V_n(t,x)=\left\{\xi\in\RR^m:|\hat{u}_n(t)-\xi|\leq k(t)|\hat{x}(t)-x|+\frac{1}{n},\ \xi\in U(t,x)\right\}.$$
	
	We use this as control constraint multifunction and consider the control system \eqref{eq24n}. We can find admissible state-control pairs $(\hat{x}_n,\hat{v}_n),\ n\in\NN$ for this system such that
	\begin{equation}\label{eq45}
		\hat{x}_n\rightarrow\hat{x}\ \mbox{in}\ C(T,\RR^N),\ \hat{u}_n-\hat{v}_n\xrightarrow[||\cdot||_1]{\text{a.e.}} 0.
	\end{equation}
	
	Using (\ref{eq44}) and (\ref{eq45}), we have
	\begin{eqnarray*}
		&&J(\hat{x}_n,\hat{v}_n)\rightarrow\hat{J}_r(\hat{x},\hat{\lambda})=\hat{m}_r,\\
		&\Rightarrow&m\leq\hat{m}_r,\\
		&\Rightarrow&m=\hat{m}_r=m_r.
	\end{eqnarray*}
The proof  of Theorem 4.1 is now complete.
\end{proof}

\medskip
{\bf Acknowledgments.} This research was supported by the Slovenian Research Agency grants
P1-0292, J1-8131, N1-0114, N1-0064, and N1-0083.

\end{document}